\newtheorem{theorem}{Theorem}[section]
\newtheorem{corollary}{Corollary}[section]
\newtheorem{proposition}{Proposition}[section]
\newtheorem{definition}{Definition}[section]
\newtheorem{remark}{Remark}[section]
\newcommand{\be}{\begin{equation}}
\newcommand{\ee}{\end{equation}}
\newcommand{\del}{\partial}
\newcommand{\bp}{\begin{proof}}
\newcommand{\ep}{\end{proof}}
\newcommand \RR   {\mathbb{R}}
\newcommand \Ga   {\Gamma}
\definecolor{listinggray}{gray}{0.9}
\definecolor{lbcolor}{rgb}{0.9,0.9,0.9}
\def\del{\partial}
\newenvironment{dedication}
  {
  }
  {
  }
\subjclass[2020]{53C42, 35L65, 35A02, 58K25, 57R42, 53C21, 35B35, 53C45}
\keywords{isometric immersions, metric, stability, uniqueness, relative entropy, corrugated, curvature}
\begin{document}
	
	\numberwithin{equation}{section}
	
	\title[Corrugated Versus Smooth Uniqueness and Stability of  Isometric Immersions]{Corrugated Versus Smooth Uniqueness and Stability of Negatively Curved Isometric Immersions}

	\author[C. Christoforou]{Cleopatra Christoforou}
	\address[Cleopatra Christoforou]{Department of Mathematics and Statistics,
		University of Cyprus, Nicosia 1678, Cyprus.}
	\email{christoforou.cleopatra@ucy.ac.cy} 
	
	\date{\today}

			\maketitle

\begin{dedication}
\begin{center} {\it \small Dedicated to my advisor Constantine Dafermos on the occasion of his 80th birthday.}
\end{center}
\end{dedication}

	\begin{abstract} We prove uniqueness of smooth isometric immersions within the class of negatively curved corrugated two-dimensional immersions embedded into $\mathbb{R}^3$. The main tool we use is the relative entropy method employed in the setting of differential geometry for the Gauss-Codazzi system. The result allows us to compare also two solutions to the Gauss-Codazzi system that correspond to a smooth and a $C^{1,1}$ isometric immersion of not necessarily the same metric and prove continuous dependence of their second fundamental forms in terms of the metric and initial data in $L^2$.
	\end{abstract}

	
\section{Introduction}\label{sec1}

	The isometric immersion problem is a fundamental and conceptually important problem in Differential Geometry and in the recent years, there is an extensive effort in examining the connection between Continuum Physics and isometric immersions from the research community in order to further develop each field adapting tools from either one and even combining them. As a consequence, some of these studies resulted in establishing the existence of the so-called \emph{corrugated surfaces}, which are 2-d isometric immersions embedded into $\RR^3$  of negative Gauss curvature that are not necessarily smooth but wrinkled, grooved and with edges and corners. The topic of this paper is the uniqueness of  \emph{corrugated surfaces} versus smooth ones and their stability using as a tool the relative entropy method in the setting of geometry.
	
	The history of the isometric immersion problem goes back to Riemann, when he  introduced abstract manifolds with a metric structure, the so-called Riemannian manifolds, by generalizing the classical objects of curves and surfaces in $\mathbb{R}^3$. Naturally, this gave rise to the isometric embedding question, which is the issue whether a Riemannian manifold can be immersed into some Euclidean space $\mathbb{R}^N$ with its given induced metric. 
	Nash in 1954 and Kuiper in 1955 proved the existence of a global $C^1$ isometric embedding of $n-$ dimensional Riemannian manifolds in $\mathbb{R}^N$ for $N=2n+1$. Actually, later on, Nash was able to establish smooth embedding into $\mathbb{R}^N$ by using smoothing operators that led to the famous 
	Nash-Moser iteration.
	Clearly, the issue for the smallest possible $N$ was raised and this remains an open question up to today.

The central topic of this paper is the isometric immersion problem for  2-d negatively curved manifolds embedded into $\mathbb{R}^3$, which can be formulated as an initial or initial-boundary value problem for a system of nonlinear partial differential equations of mixed elliptic--hyperbolic type, the so-called \emph{Gauss--Codazzi system}. This system is governed by  two partial differential equations
	\begin{equation*}
\begin{aligned}
	\del_1 M-\del_2 L&=\Ga_{22}^{(2)}L-2\Ga_{12}^{(2)}M+\Ga_{11}^{(2)} N\\
\del_1 N-\del_2 M&=-\Ga_{22}^{(1)}L+2\Ga_{12}^{(1)}M-\Ga_{11}^{(1)} N
\end{aligned}
\end{equation*}
and a constrained equation
	\begin{equation*}
LN-M^2=K 
\end{equation*}
with the unknown variables $(L,M,N)$ corresponding to the second fundamental form of the manifold.
This system is in general of mixed-type and its type is related to the sign of the Gauss curvature $K$.  In particular, for positively curved surfaces, the system is elliptic and the theory has been well developed in this case. In contrast, for negatively curved surfaces, the system is hyperbolic and there are still several questions open about global or local issues. The investigation of negatively curved surfaces was initiated by Hilbert, who proved that complete manifolds with Gauss curvature bounded above by a negative constant cannot be isometrically embedded in $\mathbb{R}^3$. Efimov in 1963 came to the same nonexistence conclusion if the Gauss curvature is bounded away from zero at a specific rate. 
The first positive result came from Hong~\cite{H} in 1993, when he showed that there exists a complete negatively curved manifold isometrically embedded into $\mathbb{R}^3$ if the Gauss curvature is negative and decays at a certain rate at infinity to zero (faster than $\rho^{-2}$ as $\rho\to\infty$). Note that $\rho$ here stands for the radius in polar coordinates in $\mathbb{R}^2$. All these results involved only {\emph smooth} immersions up to this point. 

The establishment of global non-smooth immersions has been initiated by Chen et al~\cite{CSW} and later followed by works of other researchers in the community of conservation laws that gave rise for instance to~\cite{CSW, CHW, CS2015, Cisom, CHW2} under various conditions on the given negative Gauss curvature related to its decay rate to zero and for various rough data (in $L^\infty$ space or the space of bounded variation $BV$).
The objects under consideration are two dimensional Riemannian manifolds, i.e. surfaces, with negative Gauss curvature that are not necessarily smooth, and they are called \emph{corrugated surfaces} having alternating grooves and ridges. At this point, it would be good for the reader to have in mind a very common example that of paper folding, how a sheet of paper can be folded. It can be bended, folded, or crumpled but cannot be compressed or stretched. Instead of considering surfaces with zero curvature, which include a sheet of paper, theses works~\cite{CSW, CHW, CS2015, Cisom, CHW2} resulted on the existence of \emph{corrugated surfaces}, i.e. of global embedding of two-dimensional manifolds with discontinuous data and negative Gauss curvature. For example in Figure~\ref{figure1}, one can see on the left the well-known catenoidal shell of revolution and on the right a corrugated surface with non circular cross-sections as drawn at $x=-x_0$ and of negative curvature, but both surfaces have the same metric $g$ of the catenoid. The main strategy followed in theses papers that was the fundamental idea in~\cite{CSW} is to treat the Gauss-Codazzi system as a special system of balance laws adapting construction schemes of weak solutions developed for balance laws in the setting of embedding a manifold into $\RR^3$. This novel approach provided connection between Continuum Mechanics and Differential Geometry and resulted in establishing the existence of the so-called \emph{corrugated surfaces}. 
In particular, in ~\cite{CS2015}, Christoforou and Slemrod used the compensated compactness method and captured Hong's result for $C^{1,1}$ isometric immersions. More precisely, they proved global existence of a $C^{1,1}$ negatively curved isometric immersion having Gauss curvature with the same rate $\rho^{-2-\delta}$ as Hong~\cite{H} with $\delta\in(0,2)$ and allowing initial data in $L^\infty$. See also~\cite{LS} for the construction of a mapping that sends every 2-d surface to solutions of the steady compressible Euler equations on the surface.
\begin{figure}[htbp]
\includegraphics[scale=0.5]{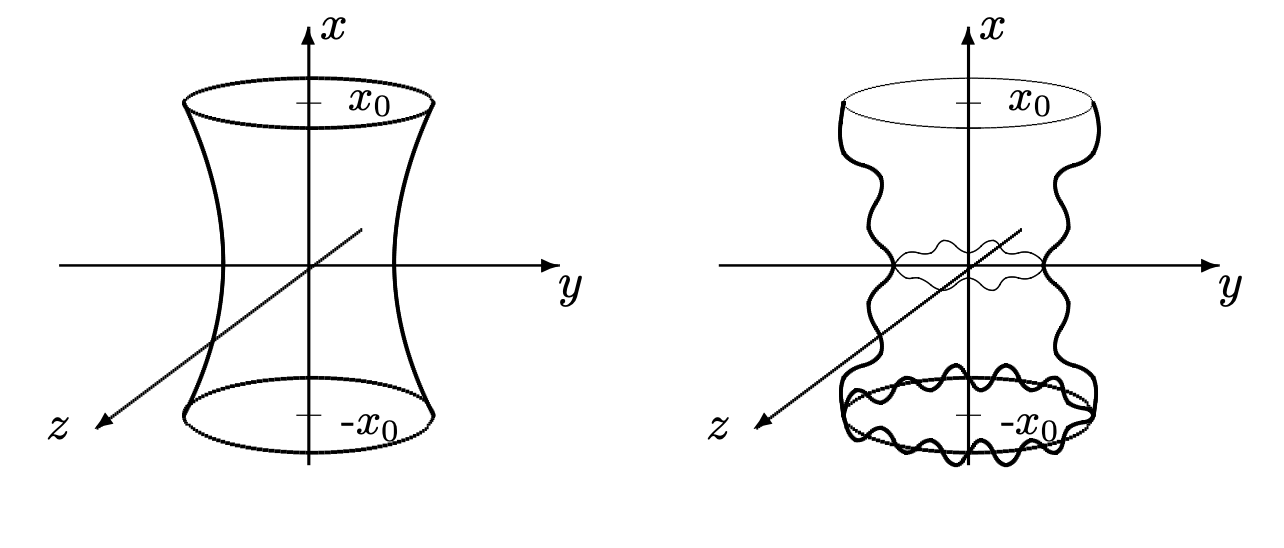}
\caption{A corrugated surface having the same metric as the catenoid in $\mathbb{R}^3$. \label{figure1}}
\end{figure}

The objective of this paper is to compare the smooth immersion of Hong~\cite{H} with the \emph{corrugated} one of Christoforou--Slemrod~\cite{CS2015}  by applying another methodology developed for Continuum Mechanics, that of the \emph{relative entropy method} but in the setting of \emph{the isometric immersion problem} with the goal to examine the uniqueness of smooth immersions in the class of corrugated ones as well as the stability of \emph{corrugated surfaces} w.r.t. their metric. This approach has been used extensively in Continuum Mechanics and it is connected with thermodynamics from its origins, but it has not been used so far in the setting of isometric embeddings. Our aim is therefore twofold: first, the application of this tool called \emph{relative entropy} in the setting of geometry that is performed for the first time and second, the proof of stability and uniqueness results mentioned above. As a consequence of our work, the \emph{relative entropy method} would serve as a new tool in the setting of immersions but also the connection between these two fields would be strengthened.

The \emph{relative entropy method} was  introduced by Dafermos \cite{dafermos79,dafermos79b} and DiPerna \cite{diperna79}  in the context of thermodynamics as a powerful tool in comparing solutions. It has been employed successfully for systems of conservation laws  ({\it e.g.}~\cite{diperna79,bds11,dst12,SV14}), or balance laws ({\it e.g.}~\cite{tzavaras05,MT14}) and even in the hyperbolic-parabolic setting. It has been used to prove measure valued weak versus smooth uniqueness and stability theorems as well as showing convergence results in the zero-viscosity or relaxation limits. An exposition of this method and its history can be found in the book~\cite{daf10}.
In~\cite{christoforou2016relative}, Christoforou and Tzavaras systematized  the derivation of relative entropy identities in a unifying framework and extended the class of systems for which this tool is applicable. The general framework of this method has also been  established by Christoforou in~\cite{Cinhom} for systems of balance laws with inhomogeneity, i.e. in this case the state, the flux and the source functions depend explicitly on the independent variables $(x,t)$. 
It is clear that the \emph{relative entropy method} was developed in parallel with the study of thermodynamics and therefore, this machinery is closely connected with the thermodynamical structure~\cite{CN1963}. Recalling the strong link between Continuum Physics and isometric immersions mentioned above, it is anticipated to employ the relative entropy method in the set up of the isometric immersion problem. In addition, the application of the relative entropy method as an outcome of this paper will build up and strengthen 
the connection across  \emph{fluid dynamics} and \emph{differential geometry} that has been developed in the recent years and also reveal the associated ``energy" of corrugated surfaces. 

The study 	of these objects, corrugated surfaces, has received a lot of attention the last decade from the community of conservation laws, in an effort to understand the appropriate notion of weak solutions to multi-d conservation laws. Actually, these  arise in non-uniqueness paradox in an attempt to capture phenomena present in multi-d systems and therefore, there is a need to study them and understand the characteristics of proper weak solutions excluding unwanted objects. Hence, the study of corrugated immersions is significant not only for the geometers, but also for its connections to multi-d conservation laws. We also expect that the notion of corrugated immersions would also arise in the applications of the isometric immersion problem in various disciplines, for instance in biology, big data processing and deep learning, and, hence, their mathematical interpretation will serve in the better understanding of these applied areas.
	
	The structure of the paper is the following: In Section~\ref{S2}, we set up the isometric immersion problem and present the available existence results for smooth and corrugated immersions corresponding to Hong's decay rate $\rho^{-2-\delta}$ and also formulate the problem in the setting of balance laws. Then, in Section~\ref{S3}, we  prove our main theorem, that includes the $L^2$ stability of corrugated versus smooth immersions with respect to initial data and the metric as well as the corrugated-smooth uniqueness. In this section, we perform the relative entropy calculations under appropriate hypotheses on the immersed surfaces and obtain continuous dependence estimates. We conclude the paper with two examples on helicoid-type surfaces and a comparison with Hong's immersion in terms of the metrics.

\medskip
	
\section{The Isometric Immersion Problem} \label{S2}
In this section, we introduce the isometric immersion problem, describe its history and current status in terms of negatively curved immersions. The aim is to set up the equations and formulate it as a system of balance laws.

Let $\Omega\subset\RR^2$ be an open set and  ${\bf y}:\Omega\to\RR^3$ a map such that the surface ${\bf y}(\Omega)\subset\RR^3$ has a tangent plane at every point ${\bf y}(x_1,x_2)$ spanned  by the vectors $\{\del_1 {\bf y},\del_2 {\bf y}\}$. Then, we immediately get that the unit normal vector ${\bf n}$ to the surface ${\bf y}(\Omega)$ is given by
$$
	{\bf n}=\frac{\del_1 {\bf y}\times\del_2 {\bf y}}{|\del_1 {\bf y}\times\del_2 {\bf y}|}
$$
with the corresponding metric 
\begin{align*}
ds^2=&d{\bf y}\cdot d{\bf y}\;\nonumber\\
=&(\del_1 {\bf y}\cdot\del_1 {\bf y})(dx_1)^2+2(\del_1 {\bf y}\cdot\del_2 {\bf y})dx_1\,dx_2+(\del_2 {\bf y}\cdot\del_2 {\bf y})(dx_2)^2\;.
\end{align*}
Now the question is given $(g_{ij})$ $i,j=1,2$ functions in $\Omega$, with $g_{12}=g_{21}$, whether there exists a map ${\bf y}:\Omega\to\RR^3$ so that
$$
 	d{\bf y}\cdot d{\bf y}=g_{11} (dx_1)^2+2g_{12} dx_1dx_2+g_{22}(dx_2)^2\;,
$$
or equivalently,
$$
\del_1{\bf y}\cdot\del _1{\bf y}=g_{11},\qquad \del_1{\bf y}\cdot\del _2{\bf y}=g_{12},\qquad \del_2{\bf y}\cdot\del _2{\bf y}=g_{22}
$$
with a linearly independent set $\{\del_1 {\bf y},\del_2 {\bf y}\}$ in $\RR^3$.
This \emph{inverse}  problem is the isometric immersion problem. By the last identities, it is obvious that this problem is fully nonlinear in terms of the three components of ${\bf y}$.

However the set up can be simplified if we bring into play the first and second fundamental forms of a manifold. In fact, a two dimensional manifold $(\mathcal{M},g)$ parametrized by $\Omega$ with associated metric $g=(g_{ij})$ admits the first fundamental form $I=(g_{i\, j})$ 
$$
	I\doteq g_{11} (dx_1)^2+2g_{12} dx_1dx_2+g_{22}(dx_2)^2\;,
$$
that is its metric $g$, and the second fundamental form $II=(h_{i\, j})$ 
$$
 II\doteq h_{11}(dx_1)^2+2h_{12} dx_1\,dx_2+h_{22}(dx_2)^2\;.
$$
If the manifold $(\mathcal{M},g)$ is isometrically immersed corresponding to a map ${\bf y}$, then it holds
$II=-d{\bf n}\cdot d{\bf y}$
with ${\bf n}$ being the unit normal vector to $\mathcal{M}$. Hence, the coefficients $(h_{ij})$ represent the orthogonality of ${\bf n}$ to the tangent plane and are associated with the second derivatives of ${\bf y}$.
Since ${\bf n}\cdot d{\bf y}=0$, it follows
$$
h_{11}= {\bf n}\cdot \del_1^2{\bf y}, \quad 2h_{12}={\bf n}\cdot\del_1\del_2{\bf y},\quad h_{22}={\bf n}\cdot\del_2^2{\bf y}.
$$
By equating the cross-partial derivatives of ${\bf y}$ and using the above identities, the isometric immersion problem reduces to solving for $(L,M,N)$ the Gauss--Codazzi system
\be\label{S2:GC}
\begin{array}{l}
\del_1 M-\del_2 L=\Ga_{22}^{(2)}L-2\Ga_{12}^{(2)}M+\Ga_{11}^{(2)} N\\
\del_1 N-\del_2 M=-\Ga_{22}^{(1)}L+2\Ga_{12}^{(1)}M-\Ga_{11}^{(1)} N
\end{array}
\ee
with the condition
\be\label{S2:GCrel}
 LN-M^2=K\,.
\ee
Here, the components of $(L,M,N)$ are given via the first and second forms as follows
\be\label{S2:hrelation}
 L=\frac{h_{11}}{\sqrt{|g|}},\qquad  M=\frac{h_{12}}{\sqrt{|g|}}, \qquad  N=\frac{h_{22}}{\sqrt{|g|}}\;,
\ee
while 
$|g|\doteq det(g_{ij})=g_{11} g_{22}-g^2_{12}$. Moreover, $K$ stands as usual for the Gauss curvature $K=K(x_1,x_2)$  of the manifold $(\mathcal{M},g)$ that is given by
\be\label{S2K}
	K(x_1,x_2)=\frac{R_{1212}}{|g|}\;,
\ee
where $R_{ijkl}$ is the curvature tensor 
$$
	R_{ijkl}=g_{lm}\left(\del_k\Ga_{ij}^{(m)}-\del_j\Ga_{ik}^{(m)}+\Ga_{ij}^{(n)}\Ga_{nk}^{(m)}-\Ga_{ik}^{(n)}\Ga_{nj}^{(m)}\right)\;,
$$
and $\Ga_{ij}^{(k)}$ is the Christoffel symbol
$$
	\Ga_{ij}^{(k)}\doteq \frac{1}{2} g^{kl}\left( \del_j g_{il}+\del_i g_{jl}-\del_lg_{ij}\right)\;.
$$
Here, the indices $i,\,j,\,k,\,l=1,2$, $(\del_1,\del_2)=(\del_{x_1},\del_{x_2})$ and the summation convention is used. Also, we use the standard notation that $(g^{kl})$ is the inverse of $(g_{ij})$. From the above expressions, we deduce, as Theorem Egregium states, that the Gauss curvature $K$ is determined from the metric $g$. Thus, in an isometric immersion problem that the metric $g$ is given, then $K$ and $\Ga_{ij}^{(k)}$ are  known functions. The Gauss-Codazzi system is in general of mixed elliptic--hyperbolic type and the type changes according to the sign of the Gauss  curvature.
The system is hyperbolic for  
surfaces with negative Gauss curvature and  
the corresponding problem can be formulated as an initial or initial--boundary value problem. 
Additionally, by the fundamental theorem of surface theory, we know that if $(g_{ij})$ and $(h_{ij})$ satisfy the Gauss-Codazzi system~\eqref{S2:GC}--\eqref{S2:hrelation}, then there exists a surface embedded into $\RR^3$ with first and second fundamental forms $I$ and $II$. This result holds in the smooth case and it has been extended up to the case when  $(h_{ij})\in L^\infty_{loc}(\Omega)$ for given $(g_{ij})\in W^{1,\infty}_{loc}(\Omega)$ due to S.~Mardare~\cite{M1}. In this case, the immersion ${\bf y}$ is $C^{1,1}(\Omega)$ locally. In short,
 the isometric immersion problem reduces to, given a positive definite metric $(g_{ij})\in W^{1,\infty}_{loc}(\Omega)$, solving the Gauss-Codazzi system~\eqref{S2:GC}--\eqref{S2:hrelation} for $(L,M,N)\in L^{\infty}_{loc}(\Omega)$ and hence, this immediately yields an isometric immersion ${\bf y}({\Omega})$, which is  $C^{1,1}$ locally. 
The reader can find an exposition of the surface theory  in the books~\cite{DoC,HH}.

For convenience of the reader, we state Mardare's result in full:
\begin{theorem}[S.~Mardare~\cite{M1}]\label{Mardare}
Assume that $\Omega$ is a connected and simply-connected open subset of $\RR^2$ and that the matrix fields $(g_{ij})\in W^{1,\infty}_{loc}(\Omega)$ being symmetric positive definite and $(h_{ij})\in L^\infty_{loc}(\Omega)$ symmetric satisfy the Gauss and Codazzi-Mainardi equations in $\mathcal{D}'(\Omega)$. Then there exists a mapping ${\bf y}\in W^{2,\infty}_{loc}(\Omega,\RR^3)$ such that
\begin{align}
g_{ij}&=\partial_i {\bf y}\cdot\partial_j {\bf y},\qquad
h_{ij}=\partial_{ij} {\bf y}\cdot\frac{\partial_1{\bf y}\times \partial_2 {\bf y}}{|\partial_1{\bf y} \times\partial_2 {\bf y}|}\nonumber
\end{align}
a.e. in $\Omega$. Moreover, the mapping ${\bf y}$ is unique in $W^{2,\infty}_{loc}(\Omega,\RR^3)$ up to proper isometries in $\RR^3$.
\end{theorem}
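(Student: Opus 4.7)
The plan is to recover the immersion by first constructing its moving frame $\{\partial_1 \mathbf{y},\partial_2 \mathbf{y},\mathbf{n}\}$ as the solution of a first-order Pfaff-type system (the Gauss--Weingarten equations), and then integrating that frame to obtain $\mathbf{y}$ itself; the Gauss and Codazzi--Mainardi equations will appear precisely as the compatibility (zero-curvature) conditions that make this procedure consistent.

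More concretely, I would write the candidate tangent vectors as $\mathbf{a}_\alpha = \partial_\alpha \mathbf{y}$ and look at the column $F = (\mathbf{a}_1,\mathbf{a}_2,\mathbf{n}) \in \RR^{3\times 3}$. The Gauss--Weingarten relations prescribe $\partial_\beta F = F A_\beta$ for two matrices $A_\beta = A_\beta(\Gamma_{ij}^{(k)}, h_{ij}, g^{ij})$, which under the assumptions of the theorem lie in $L^\infty_{loc}(\Omega)$. A direct algebraic calculation shows that $\partial_\alpha A_\beta - \partial_\beta A_\alpha + [A_\alpha,A_\beta] = 0$ in $\mathcal{D}'(\Omega)$ is exactly equivalent to the Gauss equation $R_{1212}= h_{11}h_{22} - h_{12}^2$ combined with the Codazzi--Mainardi equations assumed to hold. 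Hence, once the compatibility is in place, the plan is to invoke a low-regularity Poincaré/Frobenius lemma on the simply connected set $\Omega$ to produce $F \in W^{1,\infty}_{loc}(\Omega,\RR^{3\times 3})$ solving the Pfaff system for any prescribed initial value at a base point $x_0 \in \Omega$.

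Next, I would fix that initial value so that $F(x_0)$ satisfies $F(x_0)^\top F(x_0) = \mathrm{diag}(g_{ij}(x_0),1)$ appropriately extended, and then check that $F(x)^\top F(x)$ continues to have the required block form $\mathrm{diag}((g_{ij}(x)),1)$ everywhere: differentiating $F^\top F$ and using the Pfaff equation produces a linear ODE along curves that the matrix $\mathrm{diag}((g_{ij}),1)$ also satisfies, so a uniqueness argument for this ODE forces equality a.e. In particular, $\mathbf{a}_1,\mathbf{a}_2$ are linearly independent with Gram matrix $(g_{ij})$ and $\mathbf{n}$ is the unit normal to their span. Since $\partial_\alpha \mathbf{a}_\beta = \partial_\beta \mathbf{a}_\alpha$ follows from the symmetry of the connection part of $A_\beta$, another application of the Poincaré lemma (for $W^{1,\infty}$ forms on a simply connected domain) yields $\mathbf{y}\in W^{2,\infty}_{loc}(\Omega,\RR^3)$ with $\partial_\alpha \mathbf{y} = \mathbf{a}_\alpha$, and the identities for $g_{ij}$ and $h_{ij}$ then hold a.e.

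Uniqueness up to proper rigid motions is the standard rigidity argument: two such immersions $\mathbf{y},\tilde{\mathbf{y}}$ produce frames $F,\tilde F$ solving the same Pfaff system, so $\tilde F = F Q$ where $Q$ is constant; matching orientations of $\mathbf{n}$ makes $Q \in SO(3)$, and then $\tilde{\mathbf{y}} - Q\mathbf{y}$ has zero distributional gradient, hence is a constant. The principal obstacle I anticipate is the low-regularity Poincaré/Frobenius lemma used twice above, namely justifying the solvability of $\partial_\beta F = F A_\beta$ with $A_\beta \in L^\infty_{loc}$ under compatibility in $\mathcal{D}'$: since the coefficients are only bounded, one cannot use the classical ODE Picard iteration pointwise, and a careful mollification/commutator argument (exploiting that all the nonlinear relations among the unknowns are algebraic products of $L^\infty$ factors, so no ill-defined distributional products arise) is required — this is precisely the technical core carried out in \cite{M1}.
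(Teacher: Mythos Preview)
The paper does not contain a proof of this statement: Theorem~\ref{Mardare} is simply quoted from Mardare~\cite{M1} (``For convenience of the reader, we state Mardare's result in full'') and used as a black box to pass from weak solutions of the Gauss--Codazzi system to $C^{1,1}$ immersions. So there is no ``paper's own proof'' to compare against.

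That said, your outline is essentially the strategy of the original reference~\cite{M1}: recast the problem as the Pfaff system $\partial_\beta F = F A_\beta$ for the moving frame, identify the Gauss and Codazzi--Mainardi equations as the zero-curvature compatibility $\partial_\alpha A_\beta - \partial_\beta A_\alpha + [A_\alpha,A_\beta]=0$, solve the Pfaff system on the simply connected domain, verify that $F^\top F$ has the correct block form by a uniqueness argument, and then integrate once more to recover $\mathbf{y}$. You have also correctly flagged the genuine difficulty: the low-regularity Frobenius/Poincar\'e lemma with merely $L^\infty$ coefficients and compatibility only in $\mathcal{D}'$, which is precisely the technical contribution of~\cite{M1} (and its $L^p$ extension in~\cite{M2}). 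One small point to be careful about in the uniqueness step: the relation between the two frames is $\tilde F = Q F$ with $Q$ acting on the ambient $\RR^3$ side (left multiplication), not $\tilde F = F Q$, since the Pfaff system is $\partial_\beta F = F A_\beta$ and the constant matrix must commute with the differentiation on the side opposite to $A_\beta$.
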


In this article, we restrict our attention to geodesically complete manifolds noting that this notion is equivalent to the Riemannian manifold defining a complete metric space. In fact, do~Carmo notes ``intuitively, this means that the manifold does not have any holes or boundaries" and here it is the definition:

\begin{definition} $(\mathcal{M},g)$ is a geodesically complete Riemannian manifold
if and only if every geodesic can be extended indefinitely.
\end{definition}


Now, by Han and Hong~{\cite[Lemma 10.2.1]{HH}}, if $(\mathcal{M},g)$ is a geodesically complete simply connected smooth two
dimensional Riemannian manifold with non-positive Gauss curvature, then there exists a global geodesic coordinate system $(x,t)$ in $\mathcal{M}$ with metric 
\be\label{S2 g} g=dt^{2}+(b(x,t))^{2}dx^{2}\ee
where $b=b(x,t)$ is a smooth positive function satisfying $
b(x,0)=1$ and $\partial _{t}b(x,0)=0$ for $x\in \RR$. This allows us to work on this geodesic coordinate system $(x,t)$ and then
the Gauss-Codazzi
system takes the form
\be\label{S2:GC new}
\begin{array}{l}
\partial _{t}L-\partial _{x}M=L\partial _{t}\ln b-M\partial _{x}\ln b+Nb\partial _{t}b,\\
\partial _{t}M-\partial _{x}N=-M\partial _{t}\ln b,
\end{array}
\ee
with
\be\label{S2:GC newdt} LN-M^{2}=Kb^{2},\ee
where $\partial _{tt}b=-Kb$ defines the Gauss curvature $K$ in terms of the
metric. 

As mentioned in the introduction, Hong's result~\cite{H} in 1993 first established that if the Gauss curvature is negative and decays faster than $t^{-2}$ as $t\to\infty$, then complete negatively curved manifolds that are smooth can be isometrically embedded into $\mathbb{R}^3$. On the other hand in~\cite{CS2015}, Christoforou and Slemrod 
captured Hong's result for $C^{1,1}$ isometric immersions, i.e. proved that there exists  $(L,M,N)\in L^\infty$ in $\RR^2$ at the same rate $t^{-2-\delta}$ of Gauss curvature as Hong~\cite{H}.
For the information of the reader, convergence of schemes  such as the random choice, front tracking or vanishing viscosity with dissipative source terms as well as the compensated compactness method have been used in~\cite{Cisom, CS2015, CSW,  CHW, CHW2} to construct \emph{corrugated surfaces} using machinery from balance laws. In this paper, we are interested in the immersions corresponding to the slower decay rate $t^{-2-\delta}$ for $\delta\in(0,2)$ of Hong, for which existence is known up to now. Therefore, we only state the results of Hong~\cite{H} and Christoforou--Slemrod~\cite{CS2015} below, respectively:

\begin{theorem}[Han and Hong~{\cite[Theorem 10.2.2]{HH}}]\label{HH}
Let $(\overline{\mathcal{M}},\bar g)$ be a complete simply
connected two dimensional Riemannian manifold  with negative Gauss curvature $\overline{ K}$
with metric $\bar g=dt ^{2}+(\bar b(x,t))^2 dx^{2}$. Assume for some constant $\delta>0$ that
\begin{enumerate}
\item[(i)] $t ^{2+\delta }\left\vert \overline{ K}\right\vert $ is decreasing in $|t|$, $|t|>T$;
\item[(ii)] $\partial _{x }^{i}\ln \left\vert \overline{ K}\right\vert$, for $i=1,2$ and $t \partial
_{t }\partial _{x }\ln \left\vert \overline{ K}\right\vert $ are bounded;
\item[(iii)] $\overline{ K}$ is periodic in $x$ with period $2\pi$.
\end{enumerate}
Then $(\overline{\mathcal{M}},\bar g)$ admits a smooth isometric immersion $\bar{\bf y}$ in $
\mathbb{R}
^{3}.$
\end{theorem}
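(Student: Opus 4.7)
My plan is to follow the classical strategy of Hong, reducing the embedding problem to a global existence result for a nonlinear hyperbolic system in the geodesic coordinates and then reconstructing the immersion from the fundamental theorem of surface theory. First, I would work in the coordinate chart $(x,t)\in\RR\times\RR$ where $\bar g=dt^{2}+\bar b^{2}dx^{2}$, using periodicity in $x$ to view the $x$-variable as living on the circle $S^{1}$ of length $2\pi$. Since $\bar b$ is determined from $\overline{K}$ by the linear ODE $\partial_{tt}\bar b=-\overline{K}\bar b$ with $\bar b(x,0)=1$, $\partial_{t}\bar b(x,0)=0$, hypotheses (i)--(iii) translate into quantitative bounds and smoothness/periodicity of $\bar b$ (in particular $\bar b(x,t)\sim |t|$ for large $|t|$). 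By Mardare's Theorem~\ref{Mardare} applied at regularity $C^{\infty}$, the task is now to construct a smooth solution $(L,M,N)$ of \eqref{S2:GC new}--\eqref{S2:GC newdt} defined on all of $S^{1}\times\RR$ and then the isometric immersion $\bar{\bf y}$ follows.

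The next step is to diagonalize the system. Because $\overline{K}<0$, the relation $LN-M^{2}=\overline{K}\bar b^{2}<0$ guarantees that the second fundamental form has two distinct real asymptotic directions at every point. I would parametrize
\[
L=\bar b\sqrt{|\overline{K}|}\,\sigma_{1},\qquad M=\bar b\sqrt{|\overline{K}|}\,\sigma_{2},\qquad N=\bar b\sqrt{|\overline{K}|}\,\sigma_{3},
\]
with $\sigma_{1}\sigma_{3}-\sigma_{2}^{2}=-1$, and then introduce an angle variable $\theta$ measuring the spread of the asymptotic directions together with a rotation angle $\phi$. Substituting into \eqref{S2:GC new} produces a semilinear first order system for $(\theta,\phi)$ whose characteristic speeds coincide with the asymptotic directions, so after passing to asymptotic coordinates $(u,v)$ one obtains Riemann invariants $\Theta_{1}(u,v)$, $\Theta_{2}(u,v)$ evolved by equations of the form
\[
\partial_{v}\Theta_{1}=F_{1}(\Theta_{1},\Theta_{2};\overline{K},\partial_{x}\overline{K}),\qquad \partial_{u}\Theta_{2}=F_{2}(\Theta_{1},\Theta_{2};\overline{K},\partial_{x}\overline{K}),
\]
with inhomogeneities controlled by $|\overline{K}|^{1/2}$ and its logarithmic $x$-derivatives. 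Local smooth existence for this diagonal system is classical, and the goal becomes to upgrade it to a global one.

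The heart of the proof, and the main obstacle, is the global-in-$t$ a priori estimate. I would proceed by high-order weighted energy estimates for $(L,M,N)$ and all their $x$- and $t$-derivatives, using multipliers tuned to the decay profile of $\overline{K}$. The linear part of the evolution contributes a Gronwall factor whose exponent is proportional to $\int_{0}^{t}|\overline{K}(x,s)|^{1/2}\,ds$; hypothesis (i) that $|t|^{2+\delta}|\overline{K}|$ is monotone for large $|t|$ forces $|\overline{K}|^{1/2}$ to decay strictly faster than $|t|^{-1}$, making this integral convergent and yielding uniform bounds. The periodicity assumption (iii) removes any spatial boundary terms, while hypothesis (ii) provides the derivative bounds on $\ln|\overline{K}|$ needed to close estimates on $\partial_{x}(L,M,N)$ and, by bootstrap after differentiating the system, on all higher derivatives. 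Together with the standard continuation principle for symmetric hyperbolic systems, these bounds extend the local smooth solution to all of $S^{1}\times\RR$.

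The main difficulty I anticipate is precisely this last step: the borderline critical decay rate $|t|^{-2}$ for $|\overline{K}|$ is exactly the threshold at which Efimov-type nonexistence phenomena appear, so the weights in the energy estimates and the use of $\delta>0$ must be very carefully balanced. In particular, controlling the terms that arise from $\partial_{x}\bar b$ and $\partial_{t}\bar b$ after differentiating the system requires using both the monotonicity in (i) and the boundedness in (ii) in tandem, rather than either one alone; a naive Gronwall argument would yield only polynomial growth that eventually overpowers the decay of $\overline{K}$. Once this estimate is in place, periodicity in $x$ and standard Sobolev embedding on $S^{1}\times[-T,T]$ yield the smoothness of $(L,M,N)$, and Theorem~\ref{Mardare} (in its smooth version) delivers the required immersion $\bar{\bf y}\in C^{\infty}(\RR^{2},\RR^{3})$.
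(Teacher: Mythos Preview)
The paper does not prove this theorem at all: Theorem~\ref{HH} is quoted verbatim from Han--Hong~\cite[Theorem~10.2.2]{HH} and is used purely as an input (the existence of a smooth comparison immersion) for the relative-entropy argument in Section~\ref{S3}. There is therefore no ``paper's own proof'' to compare your proposal against; the authors simply cite the result and move on.

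That said, your sketch is broadly in the spirit of Hong's original argument, but it is only a roadmap, not a proof, and it deviates from the actual method in a significant way. Hong does not close the global estimate by high-order $L^{2}$ energy methods with Gronwall; instead he works directly with the Riemann invariants along characteristics (the asymptotic lines), obtaining \emph{pointwise} $C^{0}$ and $C^{1}$ bounds via comparison arguments and carefully chosen monotone quantities. The integrability of $|\overline{K}|^{1/2}$ you identify is the right mechanism, but the way it is exploited is through integration along characteristics rather than through weighted energy inequalities, and the control of higher derivatives is obtained by differentiating the diagonalized system and iterating the same characteristic estimates, not by Sobolev bootstrap on $S^{1}\times[-T,T]$. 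Your ``standard continuation principle for symmetric hyperbolic systems'' is also not directly applicable, since the Gauss--Codazzi system in these variables is genuinely quasilinear with linearly degenerate fields, and the relevant blow-up criterion concerns the angle between asymptotic directions collapsing to $0$ or $\pi$, which is precisely what Hong's invariant-region type argument rules out. If you want to supply a proof here, you should follow the characteristic method as in~\cite{H} or~\cite[Chapter~10]{HH} rather than an energy scheme.
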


\begin{theorem}[Christoforou and Slemrod~{\cite[Theorem 6.3]{CS2015}}]\label{thm2}
Let $(\mathcal{M},g)$ be a geodesically complete simply connected smooth two
dimensional Riemannian manifold with negative Gauss curvature $K$ and induced metric $ g=dt ^{2}+( b(x,t))^2 dx^{2}$.
For some $T_1>0$ large enough, assume that $b$ and $K$ are independent of $x$ satisfying 
\be\label{SV: h}
\partial_{tt} b=-K \,b,\quad b(0)=1,\quad \partial_{t} b(0)=0\;,
\ee
and
\be\label{SV: k} |K|=\frac{C}{(1+|t|)^{2+{\delta}}},\quad \text{for all }|t|>T_1\ee
for some constant $C>0$ and $\delta\in(0,2)$. Then $(\mathcal{M}, g)$ admits a global isometric immersion ${\bf y}$ in $\RR^3$, which is locally $C^{1,1}(\RR^2)$.
\end{theorem}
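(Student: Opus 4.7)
The plan is to follow the compensated compactness strategy that turns the Gauss--Codazzi system in the geodesic coordinates~\eqref{S2:GC new}--\eqref{S2:GC newdt} into an inhomogeneous $2\times 2$ hyperbolic system of balance laws, and then to exploit the Hong-type decay of~$K$ to obtain a uniform $L^{\infty}$ bound, pass to the limit via Tartar--Murat, and finally feed the result into Mardare's Theorem~\ref{Mardare}.

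First, I would use the constraint $LN-M^{2}=Kb^{2}$ to eliminate one component and introduce fluid--dynamical variables, for instance $\rho:=L>0$ and $v:=-M/L$, so that $N=\rho v^{2}+Kb^{2}/\rho$. Because $K<0$, the quantity $-K b^{2}/\rho$ plays the role of a Chaplygin-type ``pressure'', and substituting into~\eqref{S2:GC new} yields a $2\times 2$ genuinely nonlinear hyperbolic system for $(\rho,v)$ whose homogeneous part has explicit Riemann invariants $w,z$. Since $b$ and $K$ depend only on $t$, the inhomogeneity depends only on $t$ and its coefficients are combinations of $\partial_{t}\ln b$ and $b\,\partial_{t}b$; this places the problem squarely inside the framework of inhomogeneous balance laws for which the compensated compactness toolbox is available.

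Second, I would construct approximate solutions $(\rho^{\epsilon},v^{\epsilon})$ via the vanishing viscosity method and track $\max|w^{\epsilon}|,\max|z^{\epsilon}|$ along the characteristic flow. This produces a Gronwall-type integral inequality whose kernel is controlled by $|K(t)|\,b(t)$. From $\partial_{tt}b=-Kb$, $b(0)=1$, $\partial_{t}b(0)=0$, together with $|K|\sim(1+|t|)^{-2-\delta}$, one obtains $b(t)\lesssim 1+|t|$, and hence $|K|\,b\in L^{1}(\mathbb{R})$, which yields a uniform $L^{\infty}_{loc}(\mathbb{R}^{2})$ bound on $(\rho^{\epsilon},v^{\epsilon})$, and consequently on $(L^{\epsilon},M^{\epsilon},N^{\epsilon})$. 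One next extracts entropy--entropy flux pairs of the homogeneous $2\times 2$ system, verifies the $H^{-1}_{loc}$ compactness of the entropy production terms (using the bound on the dissipation coming from the $\epsilon$-viscosity and the decay of the source), and invokes the div--curl lemma to force the associated Young measure to be a Dirac mass, giving strong $L^{p}_{loc}$ convergence $(L^{\epsilon},M^{\epsilon},N^{\epsilon})\to(L,M,N)\in L^{\infty}_{loc}(\mathbb{R}^{2})$ solving~\eqref{S2:GC new}--\eqref{S2:GC newdt} in $\mathcal{D}'(\mathbb{R}^{2})$.

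Finally, setting $h_{ij}$ via~\eqref{S2:hrelation} from $(L,M,N)$ and noting that the prescribed metric $g=dt^{2}+b(x,t)^{2}\,dx^{2}$ lies in $W^{1,\infty}_{loc}(\mathbb{R}^{2})$, an application of Mardare's Theorem~\ref{Mardare} delivers a global mapping ${\bf y}\in W^{2,\infty}_{loc}(\mathbb{R}^{2},\mathbb{R}^{3})$ realising $(g_{ij})$ as its first fundamental form, and thus an isometric immersion that is $C^{1,1}$ locally. The main obstacle is Step~Two: the manifold is non-compact and the system carries geometric source terms, so standard invariant-region arguments cannot be applied directly; one must combine a careful tracking of the Riemann invariants with integrability against $|K|\,b$, and this is precisely where the Hong-type decay exponent $2+\delta$ with $\delta\in(0,2)$ is indispensable, the lower bound $\delta>0$ ensuring integrability of the source and the upper bound $\delta<2$ keeping the Chaplygin-type pressure compatible with the entropy framework needed for compensated compactness.
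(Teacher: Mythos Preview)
The paper does not itself prove Theorem~\ref{thm2}; it is quoted from \cite{CS2015} and only the surrounding discussion indicates the method used there. Your overall architecture---vanishing viscosity approximation, uniform $L^\infty$ control, compensated compactness to pass to the limit, then Mardare's Theorem~\ref{Mardare} to produce the $C^{1,1}$ immersion---is indeed the strategy the paper attributes to \cite{CS2015}, and the glueing of forward and backward solutions at $t=0$ is also mentioned in the paper (though you omit it).

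There is, however, a concrete error in your Step~Two. You assert that the $2\times 2$ system obtained after eliminating $N$ is \emph{genuinely nonlinear}. The paper computes the eigenvalues $\lambda_{1,2}=-\tfrac{m}{b\ell}\pm\tfrac{1}{b\ell}$ and states explicitly that \emph{each characteristic field is linearly degenerate}. This is not a cosmetic point: the Young-measure reduction you invoke (``force the associated Young measure to be a Dirac mass'') proceeds very differently for linearly degenerate fields than for genuinely nonlinear ones, and the entropy--entropy flux pairs one actually uses exploit the linear degeneracy rather than genuine nonlinearity. Relatedly, the paper (following \cite{CS2015}) works in the scaled variables $(\ell,m,n)$ of~\eqref{SV:lm}--\eqref{SV:rel} and the Riemann invariants $u=-\tfrac{m}{\ell}+\tfrac{1}{\ell}$, $v=-\tfrac{m}{\ell}-\tfrac{1}{\ell}$, for which the invariant region takes the clean form $u<0<v$ (equivalently $\ell<0$ bounded). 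Your choice $\rho=L>0$ is at odds with this: the strict hyperbolicity used in \cite{CS2015} corresponds to $\ell<0$, hence $L<0$.

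Finally, your explanation for the upper bound $\delta<2$ (``keeping the Chaplygin-type pressure compatible with the entropy framework'') is speculative; the paper gives no such mechanism, and the restriction in \cite{CS2015} arises from the specific growth/decay balance needed for the invariant-region estimate in the scaled variables, not from an entropy compatibility constraint.
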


Now, we further simplify system~\eqref{S2:GC new}--\eqref{S2:GC newdt}  by scaling the variables as follows:
\be \ell=\frac{L}{b^2\sqrt{|K|}},\quad m=\frac{M}{b\sqrt{|K|}},\quad n=\frac{N}{\sqrt{|K|}}\ee
to obtain
\be\label{SV:lm}
\begin{array}{l}
\partial _{t}\ell-\frac{1}{b}\partial _{x}m+(\ell-n)\partial _{t}\ln b+\frac{\ell}{2}\partial _{t}\ln |K|  -\frac{m}{2b}\partial _{x}\ln |K|=0,\\\\
\partial _{t}m-\frac{1}{b}\partial _{x}n+2m\partial _{t}\ln b+\frac{m}{2} \partial _{t}\ln |K| -\frac{n}{2}\partial _{x}\ln |K|   =0,
\end{array}
\ee
with
\be\label{SV:rel} \ell n-m^{2}=-1. \ee

In the geodesic coordinate system $(x,t)$, we treat $t$ as the time variable and $x$ as space. In fact, in~\cite{CS2015}, the existence of solutions to the Gauss-Codazzi system is established by assigning appropriate data at $t=0$ and solving the system forward and backwards in time. Then the two solutions are glued since the initial data to~\eqref{SV:lm} at $t=0$ are chosen to be at least $C^1$. Another point is that the Codazzi equation allows us to seek for solutions $(\ell,m)$ to the system of pdes and treating the component $n$ as a function of the unknowns. Indeed, assigning initial data
\begin{equation}
\label{S2: conslaws ID}
(\ell(x,0),m(x,0))=(\ell_0,m_0),\qquad x\in\RR
\end{equation}
and using a vanishing viscosity scheme, it is shown that
there exists a subsequence, $(\ell^\mu,m^\mu,n^\mu)$ that converges weak$\,^*$ in $L^\infty(\Omega)$ to $(\ell,m,n)$ as $\mu\to0$ such that $|(\ell,m,n)|\le A$ a.e. in $\Omega$
where $A$ is independent of the viscosity parameter $\mu$ and the limit $(\ell,m,n)$ is a bounded weak solution of the Gauss–Codazzi system in the domain  $\Omega$.

In addition, following the analysis in~\cite{CS2015}, we assume that $b$ is a function only of $t$, i.e. independent of $x$. This assumption is made only to avoid further technicalities that would give rise to additional source terms in the system of conservation laws written next. In other words, system~\eqref{SV:lm}--\eqref{SV:rel} is equivalent to
\begin{equation}
\label{S2: conslaws}
\partial _{t} U+\partial _{x}(f(U;b))+P(U;b)=0
\end{equation}
with the conserved quantity $U=(\ell,m)^T$, the flux $f$ and the source $P$ to be
\begin{equation}\label{S2: fP}
f(U;b)=-\frac{1}{b(t)}\left[\begin{array}{c}
m\\ \frac{m^2-1}{\ell}
\end{array}
\right],\qquad 
P(U;b)=\left[\begin{array}{c}
(\ell-\frac{m^2-1}{\ell})\partial _{t}\ln b+\frac{\ell}{2}\partial _{t}\ln |K|  \\ 2m\partial _{t}\ln b+\frac{m}{2} \partial _{t}\ln |K| 
\end{array}
\right],
\end{equation}
Actually, if the function $b$ depends also on $x$, then we can still write the system in conserved form with the flux $f$ depending explicitly also on $x$, but at the expense of having additional terms in the source $P$ that would involve space derivatives of $b$. Thus, from here and on the function $b$ associated with the metric $g$ via~\eqref{S2 g} is independent of $x$. We note that both the flux and the source are inhomogeneous in the sense that depend explicitly on time via this function $b$. This of course will affect our analysis below and therefore, we use the notation $\phi(\cdot; b)$ when a generic function $\phi$ depends also on $b$. Let us here point out that in~\cite{Cinhom}, we perform the relative entropy calculation when the constitutive functions depend explicitly on $x$ and $t$, which is more general than our case here, since the dependence is only via the component $b(t)$. Therefore, we avoid applying the results of~\cite{Cinhom} since the hypotheses placed there are not suitable for the setting given here.

The eigenvalues associated with system~\eqref{SV:lm} are
\be\lambda_1=-\frac{m}{b\,\ell} +\frac{1}{b\,\ell},\qquad\lambda_2=-\frac{m}{b\,\ell} -\frac{1}{b\,\ell}\ee 
and we see that each characteristic field is linear degenerate. System~\eqref{SV:lm} is \emph{strictly hyperbolic} if $\lambda_1<\lambda_2$, or equivalently if $\ell<0$ is finite.
It is also worth mentioning that the Riemann invariants associated with the above system are
\be\label{SV:uv} u=-\frac{m}{\ell}+\frac{1}{\ell},\qquad v=-\frac{m}{\ell}-\frac{1}{\ell}\;.\ee
Then, uniform strict hyperbolicity in the $(u,v)$ variables is equivalent to $v-u=-\frac{2}{\ell}$ being positive and uniformly bounded away from zero. That is again true if $\ell<0$ is finite.
In addition, the system is endowed with an entropy-entropy flux pair $(\eta,q)$ given by
\begin{equation}
\label{S2: eta-q}\eta(\ell,m)=-\frac{m^2+1}{\ell},\qquad q(\ell,m;b)=\frac{m^3-m}{b\,\ell^2}\;,
\end{equation}
and as we show below the entropy $\eta$ is convex for $\ell<0$.  Indeed the Hessian of $\eta$ is given by
\be\nabla^2\eta=\left[
\begin{array}{cc}
\eta_{\ell\ell} & \eta_{\ell m}\\
\eta_{\ell m} & \eta_{mm}
\end{array}\right]=-\frac{2}{\ell}\left[
\begin{array}{cc}
\frac{m^2+1}{\ell^2}  & -\frac{m}{\ell}\\
-\frac{m}{\ell} & 1
\end{array}\right].
\ee
 Since $v-u>0$, we have $\ell<0$ and we claim that the Hessian is positive definite. One can compute the determinant $\text{det} \nabla^2\eta=\frac{4}{\ell^4}$ and the eigenvalues $\mu_2>\mu_1>0$ of the Hessian $\nabla^2\eta$: 
\begin{align}
\mu_{1,2}&=-\frac{1}{\ell^3}(m^2+1+\ell^2)\pm\frac{1}{\ell^3}\sqrt{(m^2+1+\ell^2)^2-4\ell^2}
\nonumber\\
&=-\frac{1}{\ell^3}(m^2+1+\ell^2)\pm\frac{1}{\ell^3}\sqrt{m^4+2m^2(1+\ell^2)+(\ell^2-1)^2
}
>c>0\,.
\end{align}
It is immediate to conclude from the above expressions that the eigenvalues are real and strictly positive for $\ell<-r$, with $r>0$ a positive constant, and thus the Hessian $\nabla^2\eta$ is positive definite. Therefore, the Gauss-Codazzi system~\eqref{SV:lm}--\eqref{SV:rel} admits a strictly convex entropy $\eta$, i.e. there exists a constant $c_0>0$
\be\label{S2convex}
\xi^T\left( \nabla^2 \eta(\cdot) \right)\xi\ge c_0 |\xi|^2>0\qquad \forall\xi\in\mathbb{R}^n\setminus\{ 0\}.
\ee
It is well known of course that the success of the relative entropy method relies on the convexity of the entropy and therefore, we meet its applications mainly to hyperbolic systems with convex entropy, cf.~\cite{MR0285799}.
In addition, the convex entropy $\eta$ can be "viewed" as the energy of the immersion.
As a last comment at this point, we add that the solutions to the Gauss-Codazzi system established in Theorems~\ref{HH}--\ref{thm2} are uniformly bounded in $\RR^2$. Actually, from the analysis in~\cite{CS2015}, we have that the weak solution $U=(\ell,m)^T\in L^\infty(\RR^2)$ and there are bounded invariant regions with $u<0<v$. This yields that $U(x,t)$ is uniformly bounded for all $x$ and $t$ satisfying $U(x,t)\in B_0 $ with the ball
$B_0:=[-R_0,-r_0]\times(-1,1)$ for all $(x,t)\in\RR^2$ where $r_0$ and $R_0$ are positive constants. This follows from the $H^{-1}$ compactness and uniform bounds of the approximate sequence $(u^\mu,v^\mu)$ of Riemann invariants. Let us also denote by $\overline B:=[-\bar R,-\bar r]\times(-1,1)$ the range of the smooth solution $\bar U=(\bar \ell, \overline{m})^T$ obtained in Hong's  Theorem~\ref{HH}.

\section{The Relative Entropy Method for the Gauss-Codazzi System}\label{S3}
In this section, we consider two 2-dimensional Riemannian manifolds $(\mathcal{M},g)$ and $(\overline{\mathcal{M}},\bar g)$ that are geodesically complete simply connected with non-positive Gauss curvature and not having the same metric necessarily. Let $\Omega=\mathbb{R}\times [0,T]\subset \mathbb{R}^2$ be the common domain within which both manifolds are immersed surfaces. Furthermore, suppose that $(\overline{\mathcal{M}},\bar g)$ is a smooth isometric immersion with $\overline{\bf y}$ in $\Omega$ the corresponding smooth immersed map. On the other hand, let $(\mathcal{M},g)$ be an isometric immersion with its map ${\bf y}$ not necessarily smooth, but at least  locally $C^{1,1}(\Omega)$. For convenience, we call $(\mathcal{M},g)$ corrugated since smoothness is not required. Last, we consider the corresponding triplets $(\ell,m,n)$ and $(\bar\ell,\overline{m}, \bar n)$ satisfying the Gauss-Codazzi system~\eqref{SV:lm}--\eqref{SV:rel}. Eventually, we plan to apply our result in the global case that is: $\Omega=\mathbb{R}^2$, while $(\mathcal{M},g)$ and $(\overline{\mathcal{M}},\bar g)$ are the immersions obtained in Theorem~\ref{thm2} from~\cite{CS2015} and Theorem~\ref{HH} from~\cite{H}.

We aim to use the relative entropy method to compare the two manifolds and prove $L^2$ stability of the second fundamental form in terms of the initial data and the metric as well as a corrugated-smooth uniqueness theorem. Indeed, in what follows, we perform the relative entropy calculations comparing the two manifolds $(\mathcal{M},g)$ and $(\overline{\mathcal{M}},\bar g)$ at the level of the triplets $(\ell,m,n)$ and $(\bar\ell, \overline{m}, \bar n)$ using the connection of the immersion problem with Continuum Physics as described in the previous section.

For the purpose of performing the relative entropy calculations below, we set the following hypotheses:
\medskip

\noindent
{\bf Hypotheses (H)}. For some $T$, $0<T\le \infty$ and $\Omega=\mathbb{R}\times [0,T]\subset \mathbb{R}^2$, we assume 
\begin{enumerate}
\item[(H$_0$)] Boundedness of the solutions to the Gauss-Codazzi, i.e. $U=(\ell,m)^T\in B_0$ and $\overline U=(\bar \ell,\overline m)^T\in\overline B$  for all $(x,t)\in\Omega\subset\RR^2$ and $\overline U(\cdot,t)\in L^1(\RR)\cap BV(\RR)$ for all $t\in [0,T]$.
\item[(H$_1$)] The pair $K$ and $b$ are functions of  $(x,t)\in\Omega\subset\RR^2$ that are both independent of $x$. Moreover, the function $b>0$ is associated with the metric $g$ via~\eqref{S2 g} of a manifold $(\mathcal{M},g)$, while $K<0$ corresponds to the Gauss curvature of the same manifold, i.e.~\eqref{S2K}. Similar conditions hold for the pair $\overline K$ and $\bar b$ in the case of the smooth manifold $(\overline{\mathcal{M}},\bar g)$.
\item[(H$_2$)] The following integrability conditions hold: $$(\bar b(t))^{-1},\,\,   \partial _{t }\ln  b,\,\, \partial _{t }\ln \left\vert K\right\vert \in L^1[0,T]\,.$$
\item[(H$_3$)] The pairs $(K,b)$ and $(\overline K, \bar b)$ satisfy 
$$\left|\del_t\ln \frac{b(t)}{\bar b(t)}\right|\in L^2[0,T],\quad\,\left|\del_t\ln \frac{K(t)}{\overline K(t)}\right|\in L^2[0,T],\quad\, \left|\frac{1}{\bar b(t)}-\frac{1}{b(t)}\right| \in L^1[0,T]\,.$$
\end{enumerate}
\smallskip

Now, we consider the triplets $(\ell,m,n)$ and $(\bar\ell, \overline{m}, \bar n)$ that satisfy system~~\eqref{SV:lm}--\eqref{SV:rel} corresponding to $(\mathcal{M},g)$ and $(\overline{\mathcal{M}},\bar g)$. First, we define the relative entropy associated with system~\eqref{SV:lm}
\be\label{Sec3:relative eta}
\eta(\ell,m|\bar \ell, \overline{m}):=\eta(\ell,m)-\eta(\bar \ell, \overline{m})-\nabla\eta(\bar \ell, \overline{m})\left( U-\overline U\right)
\ee
that takes the special form
\begin{align}\label{Sec3:relative eta1}
\eta(\ell,m|\bar \ell, \overline{m})&=-\frac{1}{\bar\ell}\left\{(m- \overline{m})^2+(\ell-\bar\ell)\left(\eta(\ell,m)-\eta(\bar\ell, \overline{m}) \right)\right\}
\nonumber\\
&=-\frac{1}{\bar\ell}\left\{(m- \overline{m})^2+\frac{(1+\overline{m}^2)}{\ell\bar\ell}{(\ell-\bar\ell)^2}+\frac{1}{\ell}(m+\overline{m})(\overline{m}-m)(\ell-\bar\ell)
\right\}\;.
\end{align}
By the convexity of the entropy~\eqref{S2convex}, we have
$$
\eta(\ell,m|\bar \ell, \overline{m})\ge c_0(|\ell-\bar\ell|^2+|m-\overline{m}|^2)\;,
$$ for some $c_0>0$ constant. Actually the relative entropy is equivalent to the $L^2$ distance between the states $U=(\ell,m)^T$ and $\overline U=(\bar \ell, \overline{m})^T$. Indeed, since $U\in B_r$ and $\overline U\in \overline B$, we have
\be\label{Sec3:relative eta2}
 c_0(|\ell-\bar\ell|^2+|m- \overline{m}|^2)\le \eta(\ell,m|\bar \ell, \overline{m})\le c_1(|\ell-\bar\ell|^2+|m- \overline{m}|^2)\;,
\ee
for some constant $c_1>0$ possibly depending on the size of the balls $B_0$ and $\overline B$.

Next, we consider the relative entropy flux 
\be\label{Sec3:relative q}
q(\ell,m;b||\bar \ell, \overline{m};\bar b):=q(\ell,m;b)-q(\bar \ell, \overline{m};\bar b)-\nabla\eta(\bar \ell, \overline{m})\left( f(\ell,m;b)-f(\bar \ell, \overline{m};\bar b)\right)\;.
\ee
However, we observe here that the relative entropy flux depends also on the metric via the components $b$ and $\bar b$ associated with $(\mathcal{M},g)$ and the smooth manifold $(\overline{\mathcal{M}},\bar g)$, respectively.
For the purpose of performing the relative entropy computation, we need to introduce also the relative flux 
\be\label{Sec3:relative f}
f(\ell,m;b||\bar \ell, \overline{m};\bar b):= f(\ell,m;b)-f(\bar \ell, \overline{m};\bar b)-\nabla f(\bar \ell, \overline{m};\bar b)\left( U-\overline U\right)\;,
\ee 
and the relative gradient of the entropy
\be\label{Sec3:relative nabla eta}
\nabla\eta(\ell,m|\bar \ell, \overline{m}):=\nabla\eta(\ell,m)-\nabla\eta(\bar \ell, \overline{m})-\nabla^2\eta(\bar \ell, \overline{m}) \left( U-\overline U \right)\;.
\ee

Here it is the main theorem on stability and uniqueness of the immersions with respect to the metric and the initial data.

\begin{theorem}\label{newthm1} Let $0<T\le \infty$ and $\Omega=\mathbb{R}\times [0,T]\subset \mathbb{R}^2$.
Suppose that $(\mathcal{M},g)$ is a geodesically complete simply connected two
dimensional Riemannian manifold with negative Gauss curvature $K$ and induced metric $g$ of the form~\eqref{S2 g} 
with an associated corrugated isometric immersion ${\bf y}$ in $\RR^3$, which is locally $C^{1,1}(\Omega)$. Furthermore, suppose that $(\overline{\mathcal{M}},\bar g)$ is a geodesically complete simply connected smooth two
dimensional Riemannian manifold with negative Gauss curvature $\overline K$ and induced metric of the form~\eqref{S2 g} with an associated isometric immersion $\bar {\bf y}$ in $\RR^3$ that is smooth on $\Omega$. Under hypotheses (H), the followings hold true:
\begin{enumerate}
\item[(a)] Stability in $L^2$ of the second fundamental form in terms of the metric and the initial data in the sense
$$\int_{-\infty}^\infty |\ell(t)-\bar\ell(t)|^2+|m(t)-\overline{m}(t)|^2 dx\le e^{\Phi(t)}\int_{-\infty}^\infty |\ell_0-\bar\ell_0|^2+|m_0-\overline{m}_0|^2 dx+ C_g \int_0^T e^{\Phi(t)-\Phi(s)} \psi(s)ds  
$$
for some positive constant $C_g>0$ independent of the metrics and some bounded and positive functions $\Phi(s), \,\psi(s)$ on $[0,T]$, where $(\ell,m,n)$ and $(\bar\ell, \overline{m}, \bar n)$ are the corresponding solutions to~\eqref{SV:lm}--\eqref{SV:rel} with $(\ell_0,m_0)$ and $(\bar \ell_0, \overline{m}_0)$ the corresponding initial data~\eqref{S2: conslaws ID}.
\item[(b)]  If $b(t)=\bar b(t)$ for all $t\in [0,T]$, then
$$\int_{-\infty}^\infty |\ell(t)-\bar\ell(t)|^2+|m(t)-\overline{m}(t)|^2 dx\le C e^{\Phi(t)} \int_{-\infty}^\infty |\ell_0-\bar\ell_0|^2+|m_0-\overline{m}_0|^2 dx\;.$$
\item[(c)] If $b(t)=\bar b(t)$ for all $t\in [0,T]$ and $(\ell_0,m_0)=(\bar\ell_0,\overline{m}_0)$, then corrugated--smooth uniqueness holds true on $\Omega$, i.e. the corrugated immersion ${\bf y}$ coincides with the smooth one $\bar {\bf y}$.
\end{enumerate}
\end{theorem}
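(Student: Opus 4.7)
The proof proceeds by the relative entropy method applied to the balance laws~\eqref{S2: conslaws}--\eqref{S2: fP}. The plan is to derive a differential inequality for
\begin{equation*}
\mathcal{E}(t):=\int_{\mathbb{R}} \eta(\ell,m\,|\,\bar\ell,\overline{m})(x,t)\,dx
\end{equation*}
and then close with Gr\"onwall. Throughout I exploit~(H$_0$), so that $U=(\ell,m)^T\in B_0$ and $\bar U=(\bar\ell,\overline{m})^T\in\overline B$, together with the strict convexity of $\eta$ and its two-sided equivalence~\eqref{Sec3:relative eta2}, the linear degeneracy of both characteristic fields (which makes the Taylor expansion of $f$ in $U$ especially well behaved), and the $L^1\cap BV$ regularity of $\bar U(\cdot,t)$ that justifies integration by parts in $x$.

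First I would compute $\partial_t\eta(U|\bar U)+\partial_x q(U;b\,||\,\bar U;\bar b)$ by subtracting the classical identity for the smooth $\bar U$ from the distributional entropy identity for $U$ and applying the definitions~\eqref{Sec3:relative eta}--\eqref{Sec3:relative nabla eta}. After expansion the right-hand side organizes into three blocks: (i) the classical \emph{relative-flux} contribution $-\nabla^2\eta(\bar U)\,f(U;\bar b\,||\,\bar U;\bar b)\,\partial_x\bar U$ stemming from the $x$-derivative of $\bar U$; (ii) a \emph{relative-source} term generated by $P(U;\bar b)-P(\bar U;\bar b)-\nabla_U P(\bar U;\bar b)(U-\bar U)$ tested against $\nabla\eta(U|\bar U)$; and (iii) \emph{metric residuals} due to the fact that $U$ evolves under $(b,K)$ whereas $\bar U$ evolves under $(\bar b,\overline K)$. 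These residuals are isolated by writing, schematically,
\begin{equation*}
f(U;b)-f(\bar U;\bar b) = \bigl[f(U;\bar b)-f(\bar U;\bar b)\bigr] + \Bigl(\tfrac{1}{\bar b}-\tfrac{1}{b}\Bigr)\bigl(m,\,(m^2-1)/\ell\bigr)^T,
\end{equation*}
and performing the analogous splittings for $P$ in both $b$ and $K$, which produces terms of the form $(\tfrac{1}{b}-\tfrac{1}{\bar b})\Phi_1,\ (\partial_t\ln\tfrac{b}{\bar b})\Phi_2,\ (\partial_t\ln\tfrac{|K|}{|\overline K|})\Phi_3$ with $\Phi_i$ smooth and bounded on $B_0\times\overline B$.

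Next I would integrate in $x\in\mathbb{R}$: the exact $x$-divergence vanishes by~(H$_0$); blocks~(i) and the quadratic part of~(ii) are pointwise $O(|U-\bar U|^2)$ by Taylor's theorem with coefficient bounded by $\|\partial_x\bar U\|_{\infty}$, $|\partial_t\ln b|$ and $|\partial_t\ln|K||$, hence via~\eqref{Sec3:relative eta2} and~(H$_2$) they contribute at most $A(t)\mathcal{E}(t)$ with $A\in L^1[0,T]$; and the metric residuals, being merely linear in $U-\bar U$, are handled by Cauchy--Schwarz and~\eqref{Sec3:relative eta2} to absorb one factor into $\tfrac12\mathcal{E}(t)$ while leaving the metric defect
\begin{equation*}
\psi(t):=\Bigl|\tfrac{1}{b(t)}-\tfrac{1}{\bar b(t)}\Bigr|^{2}+\Bigl|\partial_t\ln\tfrac{b(t)}{\bar b(t)}\Bigr|^{2}+\Bigl|\partial_t\ln\tfrac{|K(t)|}{|\overline K(t)|}\Bigr|^{2}\in L^1[0,T]
\end{equation*}
by~(H$_3$). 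This produces $\mathcal{E}'(t)\le A(t)\mathcal{E}(t)+C_g\psi(t)$, and Gr\"onwall with $\Phi(t):=\int_0^t A(s)\,ds$ delivers part~(a). Part~(b) follows because $b\equiv\bar b$ forces $K\equiv\overline K$ (via $K=-\partial_{tt}b/b$), so $\psi\equiv 0$. For part~(c) the initial contribution also vanishes, hence $\mathcal{E}\equiv 0$, and~\eqref{Sec3:relative eta2} forces $U\equiv\bar U$ a.e.; undoing the scaling preceding~\eqref{SV:lm} yields identical second fundamental forms $(L,M,N)=(\bar L,\overline M,\bar N)$, and Theorem~\ref{Mardare} then identifies the two immersions up to a proper Euclidean isometry of $\mathbb{R}^3$.

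The main obstacle is the explicit $b(t)$-dependence of the flux $f(U;b)=-\tfrac{1}{b(t)}(m,(m^2-1)/\ell)^{T}$: the classical relative-entropy identity presumes a common constitutive law for the two compared states, which fails here since the smooth and corrugated manifolds carry different metrics. One must therefore separate the genuine quadratic relative flux from a residual that is only linear in $U-\bar U$ but multiplied by the metric mismatch $\tfrac{1}{b}-\tfrac{1}{\bar b}$, and carry out the parallel splitting for $P$ in both $b$ and $K$. Such linear-in-$(U-\bar U)$ residuals cannot be absorbed by the convexity of $\eta$ alone; they are routed into $\psi$ via Cauchy--Schwarz, which is precisely why the $L^1$- and $L^2$-in-$t$ integrabilities postulated in~(H$_3$) are needed. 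The delicate calibration of keeping the coefficient $A(t)$ of the quadratic remainder in $L^1[0,T]$ (so that $e^{\Phi(t)}$ is finite), while routing all metric-mismatch terms into $\psi\in L^1[0,T]$, is exactly what hypotheses~(H$_2$)--(H$_3$) enforce, and the reason a tailored computation is needed here rather than invoking the general inhomogeneous framework of~\cite{Cinhom}.
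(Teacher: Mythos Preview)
Your approach is essentially the paper's: derive the relative entropy inequality for the Gauss--Codazzi system written as balance laws, separate the genuinely quadratic terms from the metric-mismatch residuals, and close with Gr\"onwall. Two points in your organization need adjusting.

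First, the flux residual $(\tfrac{1}{\bar b}-\tfrac{1}{b})\,(m,(m^2-1)/\ell)^T$ is \emph{zeroth}-order in $U-\bar U$, not linear; it is paired with $\nabla^2\eta(\bar U)\,\partial_x\bar U$ and, after integration in $x$, contributes $C_g\,|\tfrac{1}{\bar b}-\tfrac{1}{b}|\,\|\partial_x\bar U\|_{L^1}$ directly (this is where the $BV$ assumption in~(H$_0$) is used). No Cauchy--Schwarz is needed here, and consequently $\psi$ carries $|\tfrac{1}{\bar b}-\tfrac{1}{b}|$ at \emph{first} power, which is precisely what~(H$_3$) supplies; your squared version is not guaranteed to lie in $L^1[0,T]$ under~(H$_3$), so as written your $\psi$ is not covered by the hypotheses.

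Second, the source contribution does not organize as ``relative-$P$ tested against $\nabla\eta(U|\bar U)$'' (that product would be cubic in $U-\bar U$); the correct structure, as in the paper, is $(\nabla\eta-\nabla\bar\eta)\cdot(P-\bar P)$ together with $\bar P\cdot\nabla\eta(U|\bar U)$, each of which is quadratic and hence controlled by the relative entropy with coefficients $|\partial_t\ln b|$, $|\partial_t\ln|K||$ and the metric-mismatch factors. With these two fixes your argument coincides with the paper's.
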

\bp
For convenience, we use the notation $\phi=\phi(U;b)$ and $\bar \phi=\phi(\overline U;\bar b)$ for some generic function $\phi$.

Let $U=(\ell,m)$ be a weak solution to~\eqref{SV:lm} and $\overline U=(\bar\ell, \overline{m})$ a strong one, with associated metric components $b$ and $\bar b$, respectively. In other words, it holds
\be\label{Sec3:Ueq}
\del_t U+\del_x(f(U;b))+P(U;b)=0
\ee
and
\be\label{Sec3:barUeq}
\del_t \bar U+\del_x(f(\bar U;b))+P(\bar U;\bar b)=0
\ee
using the form of conservation laws~\eqref{S2: conslaws}--\eqref{S2: fP} and initial data $(\ell_0,m_0)$ and $(\bar \ell_0, \overline{m}_0)$. Moreover, the entropy inequality holds
\be\label{Sec3:Ueqeta}
\del_t \eta(\ell,m)+\del_x(q(\ell,m;b))+\nabla\eta(\ell,m)\cdot P(U;b)\le 0\;,
\ee
while we have an identity in the smooth case, i.e.
\be\label{Sec3:Ueqetabar}
\del_t \eta(\bar \ell,\overline{m})+\del_x(q(\bar\ell,\overline{m}; \bar b))+\nabla\eta(\bar \ell, \overline{m})\cdot P(\overline U;\bar b)= 0\;.
\ee
Then
\be\label{Sec3:rel step1}
\del_t(\eta(\ell,m)-\eta(\bar \ell,\overline{m}))+\del_x(q(\ell,m;b)-q(\bar \ell,\overline{m};\bar b)) +\nabla\eta\cdot P -\nabla\bar\eta\cdot \overline P \le 0\;.
\ee
These relations allow us to compute
\begin{align}\label{Sec3:rel step2}
\partial_t&\left[\nabla\bar \eta\cdot(U-\overline U)\right]+\partial_x\left[ \nabla\bar \eta\,\cdot (f-\bar f)\right]=\nonumber\\
=& \nabla^2\bar \eta\,\partial_t\overline U\cdot (U-\overline U)+ \nabla^2\bar \eta\,\,\partial_x \overline U\cdot(f-\bar f)+ \nabla\bar \eta\,\cdot\left[\partial_t (U-\overline U) \partial_x (f-\bar f)\right]\nonumber\\
=& \nabla^2\bar \eta\,\left[-\left(\nabla \bar f\partial_x\overline U+ \bar P\right) \cdot(U-\overline U) +\partial_x \overline U \cdot(f-\bar f)
\right] -\nabla\bar \eta\,\cdot(P-\overline P)\nonumber\\
=&\nabla^2\bar \eta\, \partial_x \bar U\cdot  f(\ell,m;b|\bar \ell,\overline{m};\bar b) 
-\overline P\cdot \nabla^2\bar \eta\, (U-\overline U)-\nabla\bar \eta\,\cdot(P-\overline P) \;,
\end{align}
using \eqref{Sec3:Ueq},~\eqref{Sec3:barUeq} and~\eqref{Sec3:relative f}.
We also rewrite the expression
\begin{align}\label{Sec3:rel step3}
\nabla\eta\cdot P&-\nabla\bar\eta\cdot \bar P-\bar P\cdot\nabla^2\bar\eta (U-\overline U)-\nabla\bar\eta \cdot (P-\overline P)=\nonumber\\
&=(\nabla\eta-\nabla\bar\eta) \cdot (P-\bar P) +\overline P \cdot \nabla\eta(\ell,m|\bar \ell, \overline{m})\;,
\end{align}
and then combining~\eqref{Sec3:rel step1} and~\eqref{Sec3:rel step2}  with~\eqref{Sec3:rel step3}, we arrive at the relative entropy inequality
\begin{align}\label{Sec3:relentropyhyp}
\del_t(\eta(\ell,m|\bar \ell,\overline{m})&)+\del_x(q(\ell,m;b||\bar \ell,\overline{m};\bar b))+(\nabla\eta-\nabla\bar \eta)\cdot (P -\overline P) \le\nonumber\\
& \le -\nabla^2 \bar \eta\del_x \overline U\cdot  f(\ell,m;b||\bar \ell, \overline{m};\bar b)
-\overline P \cdot \nabla\eta(\ell,m|\bar \ell,\overline{m})\;.
\end{align}
Inequality~\eqref{Sec3:relentropyhyp} provides the evolution of the relative entropy and together with the convexity~\eqref{S2convex} consist of the core of our methodology that leads to the stability estimate in (a) as shown below.

From~\eqref{S2: fP} and~\eqref{Sec3:relative f}, we compute
$$
f(\ell,m;b||\bar \ell,\overline{m};\bar b)=\left(\frac{1}{\bar b(t)}-\frac{1}{b(t)}\right) \left[\begin{array}{c}
m\\ \dfrac{m^2-1}{\ell}
\end{array}\right]+\frac{1}{\bar b(t)}
\left[\begin{array}{c}
0\\ \eta(\ell,m|\bar \ell,\overline{m})+2\dfrac{(\ell-\bar \ell)^2}{\ell\bar\ell^2}
\end{array}\right]\;.
$$
Thus, by~\eqref{Sec3:relative eta2} and Hypothesis (H$_0$), we have
\begin{align}
\label{Sec3:relative f3}
\int_{-\infty}^\infty \big |\nabla^2 \bar \eta\del_x \overline U\cdot f(\ell,m;b||&\bar \ell,\overline{m};\bar b)\big|dx
\le
 C_1 \left|\frac{1}{\bar b(t)}-\frac{1}{b(t)}\right|\|\del_x \overline U\|_{L^1}+ \frac{C_0}{\bar b(t)} \int_{-\infty}^\infty \eta(\ell,m|\bar \ell,\overline m)dx\nonumber\\
&\le C_g \left|\frac{1}{\bar b(t)}-\frac{1}{b(t)}\right| + \frac{C_0}{\bar b(t)} \int_{-\infty}^\infty \eta(\ell,m|\bar \ell,\overline m)dx\;.
\end{align}
Here the positive constants $C_g=C_g(B_0, \bar B)$ and $C_0=C_0(B_0, \bar B)$ depend on the size of the balls $B_0$ and $\bar B$, i.e. the $L^\infty$ bounds of $U$ and $\overline U$. Also, $C_g$ depends on the $\|\del_x  \overline U\|_{L^1}$, while $C_0$ depends as well on the constant of convexity.  For convenience we use $C_g$ and $C_0$ as universal constants in what follows.

Next, from~\eqref{Sec3:relative nabla eta} and~\eqref{S2: eta-q}$_1$, we have
$$
\nabla\eta(\ell,m|\bar \ell,\overline m)=-\frac{1}{\bar\ell}\left[ \begin{array}{c}
-(\ell+\bar\ell)\frac{(1+\overline m^2)}{\ell^2\bar\ell^2}{(\ell-\bar\ell)^2}+\eta(\ell,m|\bar \ell,\overline m)\\
-\frac{2}{\ell}(m-\overline m)(\ell-\bar\ell)+\frac{2\overline m}{\ell\bar\ell}(\ell-\bar\ell)^2
\end{array}
\right]\;.
$$
Using the same bounds of $U$ and $\overline U$ and convexity, we immediately get that the relative gradient of the entropy is controlled by $\eta(\cdot |\cdot)$. In other words, we have
\be\label{Sec3:relative nabla eta2}
|\nabla\eta(\ell,m|\bar \ell,\overline m)|\le C_0 \eta(\ell,m|\bar \ell,\overline m)\;.
\ee
For last remaining source term in the evolution equation~\eqref{Sec3:relentropyhyp}, we compute
\begin{align*}
(\nabla\eta(\ell,m)-\nabla\eta(\bar \ell,\overline m))
&=\frac{2}{\ell\bar\ell} \left[\begin{array}{c}
\dfrac{ (m-\overline m)(m+\overline m){\bar\ell}^2+({\overline m}^2+1)(\ell+\bar\ell)(\bar\ell-\ell) }{2\ell\bar\ell}		\\ 
\bar m(\ell-\bar\ell)+(\overline m-m)\bar\ell
\end{array}
\right]
\end{align*}
and the two components of $P-\overline P=((P-\overline P)_1, (P-\overline P)_2)$ are
\begin{align*}
(P-\overline P)_1
=& 
\left\{\left(\ell-\dfrac{m^2-1}{\ell}\right) -\left(\bar\ell-\dfrac{\overline m^2-1}{\bar \ell}\right) \right\} \partial _{t}\ln b+
\left(\bar\ell-\dfrac{\overline m^2-1}{\bar \ell}\right) \partial _{t}\ln \dfrac{b}{\bar b}	\\
&+\dfrac{(\ell-\bar \ell)}{2}\partial _{t}\ln |K|
+\dfrac{\bar \ell}{2}\partial _{t}\ln \dfrac{|K|}{|\overline K|} \;,	
\end{align*}
and
\begin{align*}
(P-\overline P)_2
&= 
2(m-\overline m) \partial _{t}\ln b+
2\overline m \partial _{t}\ln \dfrac{b}{\bar b}	+\dfrac{(m-\overline m)}{2}\partial _{t}\ln |K|
+\dfrac{\overline m}{2}\partial _{t}\ln \dfrac{|K|}{|\overline K|} 	\,.
\end{align*}
Hence, we have
\begin{align*}
 \left|(\nabla\eta(\ell,m)-\nabla\eta(\bar \ell,\overline m))\right.&\left.\cdot (P-\overline P)\right| \le
C_1' (|\ell-\bar\ell|^2+|m-\overline m|^2) (| \partial _{t}\ln b(t)| + |\partial _{t}\ln |K(t)|\, |)\\
&+C_2'  (|\ell-\bar\ell|+|m-\overline m|)\left(\bar\ell \left|\partial _{t}\ln \dfrac{b(t)}{\bar b(t)} \right|
+\overline m \left|\partial _{t}\ln \dfrac{|K(t)|}{|\overline K(t)|} \right|
\right)\;,
\end{align*}
with the constant depending also on the $L^\infty$ bounds of $U$ and $\overline U$.
Therefore, we get the estimate
\begin{align}\label{Sec3:relative nabla etaP}
\int_{-\infty}^\infty (\nabla\eta(\ell,m)-&\nabla\eta(\bar \ell,\overline m))\cdot (P-\overline P)dx\le
C_g\left(\left|\del_t\ln \frac{b(t)}{\bar b(t)}\right|^2+\left|\del_t\ln \frac{K(t)}{\overline K(t)}\right|^2\right)
\nonumber\\
&+ C_0\, \left(1+|\del_t\ln b(t)|+\left|\del_t\ln |K(t)\right| |\right) \int_{-\infty}^\infty \eta(\ell,m|\bar \ell,\overline m)dx \end{align}
using~\eqref{Sec3:relative eta2} and the $L^1(\RR)$ bound of $\overline U(t)$ for $t\in[0,T]$.
Thus, by integrating~\eqref{Sec3:relentropyhyp} and using estimates~\eqref{Sec3:relative f3}, ~\eqref{Sec3:relative nabla eta2} and~\eqref{Sec3:relative nabla etaP}, we get
\begin{align}\label{Sec3:relentropyhyp2}
\frac{d}{dt}(\int_{-\infty}^\infty \eta(\ell,m|\bar \ell,\overline m)dx) \le& C_0\, \left(1+\frac{1}{\bar b(t)} +|\del_t\ln b(t)|+\left|\del_t\ln |K(t)\right| |\right) \int_{-\infty}^\infty \eta(\ell,m|\bar \ell,\overline m)dx\nonumber\\
&+C_g\left(\left|\del_t\ln \frac{b(t)}{\bar b(t)}\right|^2+\left|\del_t\ln \frac{K(t)}{\overline K(t)}\right|^2+ \left|\frac{1}{\bar b(t)}-\frac{1}{b(t)}\right|\right)\;.
\end{align}
Setting now
\be\label{S3: phi}
\varphi(t):=C_0\left(1+\frac{1}{\bar b(t)} +|\del_t\ln b(t)|+\left|\del_t\ln |K(t)\right| |\right)
\ee
and
\be\label{S3: psi}
\psi(t):=\left|\del_t\ln \frac{b(t)}{\bar b(t)}\right|^2+\left|\del_t\ln \frac{K(t)}{\overline K(t)}\right|^2+ \left|\frac{1}{\bar b(t)}-\frac{1}{b(t)}\right|\;,
\ee
we see that $\varphi,\,\psi\in L^1[0,T]$, by Hypotheses (H). Applying now Gronwall's inequality, we immediately derive the desired estimate on the relative entropy
$$
\int_{-\infty}^\infty \eta(\ell(t),m(t)|\bar \ell(t),\overline m(t)) dx\le e^{\Phi(t)}\int_{-\infty}^\infty \eta(\ell_0,m_0|\bar \ell_0,\overline m_0) dx+C_g\int_0^t e^{\Phi(t)-\Phi(s)} \psi(s)\,ds\;,
$$
with $\Phi(t)=\int_0^t\varphi(s)ds$ and (a) is proven. Part (b)  follows immediately since $\psi(s)=0$
in the case  $b(t)=\bar b(t)$. Furthermore, if, in addition, $(\ell_0,m_0)=(\bar\ell_0,\overline m_0)$, then $$\eta(\ell(t),m(t)|\bar \ell(t),\overline m(t))=0\;,$$ for all $t\in[0,T]$ and by~\eqref{Sec3:relative eta2}, the two triplets $(\ell, m, n)$ and $(\bar \ell,\overline m, \bar n)$ coincide. Thus, by fundamental theorem of surfaces, the corresponding immersed maps are identical.
\ep

\begin{remark} 
1. The domain $\Omega$ is hypotheses $(H)$ and Theorem~\ref{newthm1} can be replaced by $\Omega=\RR\times [T_1,T_2]$. Then the results of the theorem still hold in the shifted time interval $[T_1,T_2]$. Now the analysis could be appropriately modified in the case that the domain $\Omega$ is bounded. However, since we aim to capture the global case $\RR^2$, we do not treat $\Omega=[-a,a]\times[0,T]$. Nevertheless, the above analysis can be adjusted for $x\in[-a,a]$, with $a>0$ or under perioodicity conditions in the space variable.  \\
2. We remark that hypothesis (H$_3$) allows us to compare the two metrics and this is reflected via the function $\psi$ given at~\eqref{S3: psi} that corresponds to the "distance" between the metrics $g$ and $\bar g$.
\end{remark}

\medskip

In the next proposition, we anticipate the global case.

\begin{theorem}[Global case]\label{newthm2} 
Suppose that $(\mathcal{M},g)$ is a geodesically complete simply connected two
dimensional Riemannian manifold with negative Gauss curvature $K$ and induced metric $g$ of the form~\eqref{S2 g} 
with an associated corrugated isometric immersion ${\bf y}$ in $\RR^3$, which is globally $C^{1,1}(\RR^2)$. Furthermore, suppose that $(\overline{\mathcal{M}},\bar g)$ is a geodesically complete simply connected smooth two
dimensional Riemannian manifold with negative Gauss curvature $\overline K$ and induced metric of the form~\eqref{S2 g} with an associated isometric immersion $\bar {\bf y}$ in $\RR^3$ that is smooth on $\RR^2$. Assuming that hypotheses (H$_0$)--(H$_2$) hold for $\Omega=\RR^2$ and assuming $g\equiv \bar g$, the followings hold true:
\begin{enumerate}
\item[(a)]   Stability in $L^2$ of the second fundamental form in terms of the initial data:
$$\int_{-\infty}^\infty |\ell(t)-\bar\ell(t)|^2+|m(t)-\overline m(t)|^2 dx\le C e^{\Phi(t)} \int_{-\infty}^\infty |\ell_0-\bar\ell_0|^2+|m_0-\overline m_0|^2 dx$$
for all $t\in \RR$ and some positive constant $C>0$ independent of the metrics and some bounded and positive functions $\Phi(s)$ on $\RR$, where $(\ell,m,n)$ and $(\bar\ell,\overline m, \bar n)$ are the corresponding solutions to~\eqref{SV:lm}--\eqref{SV:rel} with $(\ell_0,m_0)$ and $(\bar \ell_0, \overline m_0)$ are the corresponding initial data~\eqref{S2: conslaws ID}.
\item[(b)] Corrugated versus smooth uniqueness on $\RR^2$, i.e.: If $(\ell_0,m_0)=(\bar\ell_0,\overline m_0)$, then  the corrugated immersion ${\bf y}$ coincides with the smooth one $\bar {\bf y}$.
\end{enumerate}
\end{theorem}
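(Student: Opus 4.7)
The strategy is to reduce Theorem~\ref{newthm2} to a repeated application of Theorem~\ref{newthm1}, exploiting the hypothesis $g\equiv\bar g$ to kill the metric-mismatch terms and using the remark following Theorem~\ref{newthm1} to cover negative times. Since $g\equiv\bar g$ forces $b(t)=\bar b(t)$ and $K(t)=\overline K(t)$ pointwise on $\RR$, the function $\psi$ of~\eqref{S3: psi} vanishes identically and hypothesis (H$_3$) is automatic; consequently (H$_0$)--(H$_2$), read over $\Omega=\RR^2$, are enough to trigger the relative-entropy machinery of Section~\ref{S3}.

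For part (a) on $t\ge 0$, I will invoke Theorem~\ref{newthm1}(b) on $\RR\times[0,T]$ for an arbitrary $T>0$. With $\psi\equiv 0$, the core inequality~\eqref{Sec3:relentropyhyp2} reduces to $\tfrac{d}{dt}\int\eta(\ell,m|\bar\ell,\overline m)\,dx\le\varphi(t)\int\eta(\ell,m|\bar\ell,\overline m)\,dx$, so Gronwall combined with the two-sided equivalence~\eqref{Sec3:relative eta2} delivers the desired $L^2$ bound with exponent $\int_0^t\varphi(s)\,ds$. For $t<0$, I will re-run the same computation on a slab $\RR\times[T_1,0]$, as licensed by the remark after Theorem~\ref{newthm1}; propagating the estimate backward in time produces an analogous bound with exponent $\int_t^0\varphi(s)\,ds$. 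Setting $\Phi(t):=\int_0^{|t|}\varphi(s)\,ds$, which is globally bounded on $\RR$ thanks to $(\bar b)^{-1},\,\del_t\ln b,\,\del_t\ln|K|\in L^1(\RR)$ from (H$_2$), yields a single two-sided estimate and finishes part (a).

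For part (b), the equality $(\ell_0,m_0)=(\bar\ell_0,\overline m_0)$ makes the right-hand side of (a) vanish, hence $\int\eta(\ell(t),m(t)|\bar\ell(t),\overline m(t))\,dx=0$ for every $t\in\RR$. Strict convexity~\eqref{Sec3:relative eta2} then forces $(\ell,m)=(\bar\ell,\overline m)$ almost everywhere on $\RR^2$, and the algebraic constraint~\eqref{SV:rel} propagates this to $n=\bar n$. Undoing the scaling~\eqref{S2:hrelation} together with $b=\bar b$, the second fundamental forms $(h_{ij})$ and $(\bar h_{ij})$ coincide a.e., and since the two immersions share the same first fundamental form $g=\bar g$, the uniqueness clause of Mardare's Theorem~\ref{Mardare} delivers ${\bf y}\equiv\bar{\bf y}$ up to a rigid motion of $\RR^3$.

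The main technical obstacle I anticipate lies in the backward-in-time step: one must verify that the weak corrugated solution $(\ell,m,n)$ constructed in~\cite{CS2015}, obtained by gluing two compensated-compactness limits at $t=0$, genuinely satisfies the entropy inequality~\eqref{Sec3:Ueqeta} with the correct sign on the half-plane $t<0$, i.e.\ that dissipation on that half-plane is oriented away from $t=0$. Naively running the Gronwall argument from $t=0$ towards $t=-\infty$ would flip the direction of the inequality; so one must either invoke the time-reversal $\tau=-t$ (which preserves the structure of~\eqref{SV:lm}--\eqref{SV:rel} up to sign changes that can be absorbed into $f$ and $P$ while leaving the convex entropy~\eqref{S2: eta-q} intact) or appeal to a symmetric backward-entropy inequality built into the construction. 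This bookkeeping of admissibility across $t=0$ is the only delicate point; everything else is a transcription of the already proven Theorem~\ref{newthm1}.
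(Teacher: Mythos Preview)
Your proposal is correct and follows essentially the same strategy as the paper: apply Theorem~\ref{newthm1} with $g=\bar g$ (so $\psi\equiv 0$) forward and backward in time from $t=0$, then glue and invoke the fundamental theorem of surfaces for part~(b). The paper's own proof is considerably more terse and does not explicitly address the backward-entropy-inequality orientation you flag; it simply remarks that the corrugated solution of~\cite{CS2015} is itself built by solving two forward problems (one for $t>0$, one for $t<0$) glued at $t=0$, which is precisely the time-reversal resolution you propose.
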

\begin{proof}
The proof is a direct application of Theorem~\ref{newthm1} when $T=\infty$ and the fact that we can follow the analysis in its proof also backwards in time to cover also $t<0$. Then, by glueing the two parts, we get uniqueness on $\RR^2$. We note that a global immersion can be constructed by first showing existence of the hyperbolic problem~\eqref{SV:lm}--\eqref{SV:rel} for $t>0$ and then for $t<0$. The two solutions can be glued at $t=0$ if for instance the data of the second fundamental form are at least $C^1$. This strategy is also followed in~\cite{CS2015} and this still produces a corrugated immersion. In fact, discontinuities in  $(\bar\ell,\overline m, \bar n)$ arose in finite time since the data are not regular enough for Hong's result to be applied. However, the compensated compactness method in~\cite{CS2015} shows that $(\bar\ell (t),\overline m(t), \bar n(t))\in L^\infty(\RR)$ for all times.
\end{proof}

It follows now immediately as a direct application that if we compare the two immersions constructed by Hong~\cite{H} and Christoforou-Slemrod~\cite{CS2015}, we have uniqueness if the two metrics and the initial data at $t=0$ coincide.

\begin{corollary}\label{newthm3} 
Suppose that $(\mathcal{M},g)$ admits the global $C^{1,1}(\RR^2)$ isometric immersion ${\bf y}$ of Theorem~\ref{thm2}
and $(\overline{\mathcal{M}},\bar g)$ admits the global smooth isometric immersion $\bar {\bf y}$ of Theorem~\ref{HH}
with the same initial data. Then, for any $T>0$, the corrugated immersion ${\bf y}$ coincides with the smooth one $\bar {\bf y}$ on $\Omega=\mathbb{R}\times[-T,T]$.
\end{corollary}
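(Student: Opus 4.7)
The plan is to read the corollary as a direct specialization of Theorem~\ref{newthm1}(c) on a finite time slab, combined with the backward-in-time symmetry already used in the proof of Theorem~\ref{newthm2}. By the stated hypothesis the two metrics agree (as is made explicit in the paragraph preceding the corollary), so setting $b\equiv\bar b$ and $K\equiv\overline K$ in the geodesic coordinates of Section~\ref{S2}, both triplets $(\ell,m,n)$ and $(\bar\ell,\overline m,\bar n)$ satisfy the same system~\eqref{SV:lm}--\eqref{SV:rel} starting from common Cauchy data at $t=0$.

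First I would verify hypotheses (H$_0$)--(H$_3$) on $\Omega=\mathbb{R}\times[0,T]$. Hypothesis (H$_3$) is trivial since $b=\bar b$ and $K=\overline K$ make each of the three quantities vanish identically. Hypothesis (H$_1$) is automatic because Theorem~\ref{thm2} takes $b$ and $K$ independent of $x$, and the comparison forces the same structure on Hong's data in Theorem~\ref{HH}. Hypothesis (H$_2$) follows from the explicit decay $|K(t)|=C(1+|t|)^{-2-\delta}$ of Theorem~\ref{thm2}, the smoothness of $b$, and the convexity identity $\partial_{tt}b=-Kb>0$ with $b(0)=1$, which yields $b\geq 1$ and hence $1/\bar b,\partial_t\ln b,\partial_t\ln|K|\in L^\infty[0,T]\subset L^1[0,T]$. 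Hypothesis (H$_0$) comes from the invariant-region bound $U\in B_0$ produced by the compensated compactness construction of~\cite{CS2015} for the corrugated triplet, and from the smoothness of Hong's solution for $\bar U$, with the $L^1\cap BV$ requirement interpreted in the periodic sense permitted by the Remark following Theorem~\ref{newthm1}.

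With (H) verified, Theorem~\ref{newthm1}(c) applied on $[0,T]$ yields $(\ell,m,n)\equiv(\bar\ell,\overline m,\bar n)$ there. For the backward slab $[-T,0]$ I would repeat the relative entropy calculation of Theorem~\ref{newthm1} after reversing the time direction, exactly as indicated in the proof of Theorem~\ref{newthm2}, and glue the two slabs across $t=0$ using the fact that both triplets share $C^1$ Cauchy data there, so no jump is introduced at the interface. The fundamental theorem of surface theory (Theorem~\ref{Mardare}) then promotes coincidence of the triplets on $\mathbb{R}\times[-T,T]$ to coincidence of the maps $\mathbf{y}$ and $\bar{\mathbf{y}}$ up to a rigid motion of $\mathbb{R}^3$, which is in turn pinned down by the shared initial position.

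The hard part is administrative rather than analytic: the $L^1(\mathbb{R})$ requirement on $\bar U$ inside (H$_0$) is incompatible with the strict $x$-periodicity that Theorem~\ref{HH} carries, so one must either interpret $\mathbb{R}$ as a single period in $x$ (as the Remark after Theorem~\ref{newthm1} explicitly allows) or localize to $x\in[-a,a]$ and pass to $a\to\infty$ using periodicity. Modulo that bookkeeping, the corollary is a one-line consequence of Theorem~\ref{newthm1}(c).
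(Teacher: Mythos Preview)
Your proof is correct and follows essentially the same route as the paper. The paper's argument is the one-line observation that the corollary follows from Theorem~\ref{newthm2} since (H$_0$)--(H$_1$) hold for $T=\infty$ while (H$_2$) holds for $T<\infty$; you unpack this by invoking Theorem~\ref{newthm1}(c) directly on $[0,T]$, reversing time for $[-T,0]$, and gluing---which is precisely what the proof of Theorem~\ref{newthm2} does. Your explicit verification of (H$_2$) via $b\ge 1$ and the $L^\infty[0,T]$ bounds, and your remark about handling the $L^1(\RR)$ requirement in (H$_0$) through periodicity, fill in bookkeeping that the paper leaves implicit.
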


The corollary follows immediately from Theorem~\ref{newthm2} since (H$_0$)--(H$_1$) are satisfied for $T=\infty$, while (H$_2$) for $T<\infty$.

\section{Examples}

One can apply the result of Theorem~\ref{newthm1} in various settings not necessarily the global case or for immersions with Gauss curvature decaying at Hong's rate $t^{-2-\delta}$. In general, one can compare immersed surfaces with different metrics or different initial data as long as the hypotheses of the theorem hold and at least one of the two surfaces is smooth in order for the relative entropy calculation to go through. 

We discuss here two applications both corresponding to same data:
\smallskip

\noindent
{\bf Example 1.} \emph{Comparing two helicoids.}\\
Consider two helicoids with different metrics and same data and one of them being smooth. Let
\begin{equation}\label{S4.3 helicoid1}
g_1=(1+t^2)d{x}^2+d{t}^2\;,
\end{equation}
and
\begin{equation}\label{S4.3 helicoidc}
g_c=\frac{(c^2+t^2)}{c^2}d{x}^2+d{t}^2
\end{equation}
be the two metrics corresponding to the helicoid-type surfaces $(\mathcal{H}_1, g_1)$ and  $(\mathcal{H}_c, g_c)$ with $c$ being a positive constant. Helicoid type surfaces have been studied in~\cite{CHW} in the setting of corrugated immersions and specifically, the metric written in the above form~\eqref{S4.3 helicoidc} is an example of the more general class of metrics considered in~\cite{CHW} using a \emph{different} change of variables from previous works~\cite{CSW, Cisom}. To clarify what it is captured in~\cite{CHW}, for instance for the metric~\eqref{S4.3 helicoidc}, is that if appropriate data are assigned at $t=-t_0<0$, then there exists an immersion close to the known helicoid surface described by the map
$${\bf y}(x,t)=(t\sin \frac{x}{c},\, t\cos \frac{x}{c}, \, x)
$$
and having discontinuous second derivatives $(h_{ij})=(\ell_c,m_c,n_c)$ in $L^\infty(\Omega)$, with $\Omega=\RR\times[-t_0,0]$, but with the same helicoid metric $g_c$. It should be remarked that, by even symmetry, one can obtain the weak solution in  $\Omega'=\left\{(x,t): x\in\mathbb{R}, 0\le t\le t_0\right\}$ and by periodicity it can be extended in $\RR^2$. 

Coming back to our question, we examine two such immersions and for convenience we take the case $c=1$ being smooth, while, for $c\ne 1$, we have an immersion not necessarily smooth as captured by~\cite{CSW}. We remark that both immersions correspond to geodesically complete manifolds and using our previous notation, we write
\begin{equation}
b_1(t)=\sqrt{1+t^2},\quad K_1(t)=-\displaystyle\frac{1}{(1+t^2)^2}\;,
\end{equation}
\begin{equation}
b_c(t)=\frac{1}{c}\sqrt{c^2+t^2},\quad K_c(t)=-\displaystyle\frac{c^2}{(c^2+t^2)^2}\;,
\end{equation}
with $K_1$ and $K_c$ the corresponding negative curvatures. From the analysis in~\cite{CHW}, it is shown that hypothesis (H$_0$) holds true, while (H$_1$) is trivially true. Then applying Theorem~\ref{newthm1} in the domain of the existence of the two immersions and using same data for their second fundamental forms, we get the following stability result:

\begin{proposition}
Let $T>0$ and $(\mathcal{H}_1, g_1)$ and  $(\mathcal{H}_c, g_c)$ two helicoid type surfaces corresponding to the metrics~\eqref{S4.3 helicoid1}--\eqref{S4.3 helicoidc} with $\mathcal{H}_1$ associated with the smooth map $\bar {\bf y}$ in $\RR^3$ while $\mathcal{H}_c$ with a locally $C^{1,1}$ map ${\bf y}$.
Then the corresponding second fundamental forms $(\ell_1,m_1,n_1)$ and $(\ell_c,m_c,n_c)$ are stable in $L^2(\RR)$ w.r.t. the parameter $c$ as follows
$$\int_{-\infty}^\infty |\ell_c(t)-\ell_1(t)|^2+|m_c(t)- m_1(t)|^2 dx\le L \,\frac{|c^2-1|}{c^2},\qquad t\in[0,T]$$
for some positive constant $L>0$ depending on $T$.
\end{proposition}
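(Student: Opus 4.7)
The plan is to apply Theorem~\ref{newthm1}(a) to the pair consisting of the smooth helicoid $(\mathcal{H}_1,g_1)$ playing the role of $(\overline{\mathcal{M}},\bar g)$ and the corrugated helicoid $(\mathcal{H}_c,g_c)$ playing the role of $(\mathcal{M},g)$, with identical initial data for the scaled second fundamental form at $t=0$. Under that choice, the first term in the stability estimate of Theorem~\ref{newthm1}(a) vanishes identically, and what remains is
$$\int_{-\infty}^{\infty}|\ell_c(t)-\ell_1(t)|^2+|m_c(t)-m_1(t)|^2\,dx\le C_g\int_0^T e^{\Phi(T)-\Phi(s)}\psi(s)\,ds,$$
so the whole task is reduced to bounding $\int_0^T\psi(s)\,ds$ by a constant multiple of $|c^2-1|/c^2$ and controlling $\Phi$ uniformly on $[0,T]$.

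First I would verify hypotheses (H) on $\Omega=\RR\times[0,T]$. (H$_0$) and (H$_1$) are already recorded in the text preceding the proposition, the former via~\cite{CHW} and the latter because all of $b_1, b_c, K_1, K_c$ depend only on $t$. For (H$_2$), the explicit formulas give $b_1(t)^{-1}=(1+t^2)^{-1/2}$, $\del_t\ln b_c(t)=t/(c^2+t^2)$, and $\del_t\ln|K_c(t)|=-4t/(c^2+t^2)$, all bounded on $[0,T]$. For (H$_3$), a straightforward computation yields
$$\del_t\ln\frac{b_c(t)}{b_1(t)}=\frac{t(1-c^2)}{(c^2+t^2)(1+t^2)},\qquad \del_t\ln\frac{K_c(t)}{K_1(t)}=\frac{4t(c^2-1)}{(1+t^2)(c^2+t^2)},$$
$$\frac{1}{b_1(t)}-\frac{1}{b_c(t)}=\frac{t^2(1-c^2)}{\sqrt{(1+t^2)(c^2+t^2)}\bigl(\sqrt{c^2+t^2}+c\sqrt{1+t^2}\bigr)},$$
each of which lies in the required $L^p[0,T]$ class.

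Next I would plug these expressions into the definition of $\psi$ at~\eqref{S3: psi}. The first two contributions carry a factor $(c^2-1)^2$ with coefficients uniformly bounded on $[0,T]$ for $c$ varying in a compact subset of $(0,\infty)$; the third contribution is linear in $|c^2-1|$ with coefficient bounded on $[0,T]$. Since $(c^2-1)^2$ is dominated by $|c^2-1|$ on any compact $c$-range, all three pieces combine to $\int_0^T\psi(s)\,ds\le L'\,|c^2-1|/c^2$; the factor $1/c^2$ traces directly to the appearance of $c^{-1}\sqrt{c^2+t^2}$ in $b_c$ and of $c^2/(c^2+t^2)^2$ in $K_c$. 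Simultaneously, $\Phi(t)=\int_0^t\varphi$ with $\varphi$ as in~\eqref{S3: phi} is bounded on $[0,T]$ since $\bar b_1^{-1}$, $\del_t\ln b_c$, and $\del_t\ln|K_c|$ are bounded there, so $e^{\Phi(T)-\Phi(s)}\le e^{\Phi(T)}$ contributes only to the constant $L$.

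The main obstacle I expect is the bookkeeping of $c$-dependence in the universal constants of Theorem~\ref{newthm1}. Specifically, the constant $C_g$ there depends on the invariant regions $B_0,\overline{B}$ and on $\|\del_x\overline{U}\|_{L^1}$, while $C_0$ depends on the convexity constant of $\eta$ and on the same invariant regions. To obtain a bound of the clean form $L\,|c^2-1|/c^2$ independent of the corrugated solution, one must confirm that the invariant region bounds furnished by the compensated compactness construction in~\cite{CHW} for $(\mathcal{H}_c,g_c)$ can be chosen uniformly as $c$ ranges over a compact interval, and that the smooth reference solution on $(\mathcal{H}_1,g_1)$ has $\del_x\overline{U}\in L^1(\RR)$ (ensured by the periodicity/decay of the reference helicoid data). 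Once this uniformity is in hand, the assembly of the three estimates into the stated bound follows directly.
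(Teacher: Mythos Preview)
Your proposal is correct and follows essentially the same route as the paper: apply Theorem~\ref{newthm1}(a) with $(\overline{\mathcal{M}},\bar g)=(\mathcal{H}_1,g_1)$ and $(\mathcal{M},g)=(\mathcal{H}_c,g_c)$, use identical initial data to kill the first term, then estimate $\psi$ in~\eqref{S3: psi} by a multiple of $|c^2-1|/c^2$ (the quadratic pieces being absorbed into the linear one for $c$ near~$1$) and absorb the $\Phi$-dependence into the constant $L=L(T)$. Your verification of (H$_2$)--(H$_3$) and your remark on the $c$-uniformity of $C_g$, $C_0$ are in fact more explicit than what the paper records; the paper simply notes that $c$ enters $L$ through~\eqref{S3: phi} without affecting the factor $|c^2-1|/c^2$.
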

\begin{proof}
The result is a direct application of the analysis performed in Theorem~\ref{newthm1}. We estimate $\psi(s)$ given in~\eqref{S3: psi} replacing $\bar b$ and $\bar K$ by $b_1$ and $K_1$, while $b$ and $K$ are replaced by $b_c$ and $K_c$ according to the notation of this example. Then we have
$$
\psi(t)\approx \left |\frac{1}{c^2}-1\right| \frac{t^2}{\sqrt{1+t^2)(1+\frac{t^2}{c^2})}}+ \frac{17t^2}{(1+t^2)^2(c^2+t^2)^2} |c^2-1|^2 \lesssim A t^2 \frac{ |c^2-1|}{c^2}\,,
$$
for some constant $A$. Note that here $|c-1|$ is taken to be small and therefore the first order dominates. Now the factor $L$ is estimated after the integration in time is performed in the last term of part (a) in Theorem~\ref{newthm1} and it, therefore, depends on the final time $T$. We note that $c$ is present in $L$ via~\eqref{S3: phi}, but this does not affect the factor $|c^2-1|$ established in the continuous dependence.
\end{proof}

\noindent
{\bf Example 2.} \emph{Helicoid vs. Immersion with decay rate $t^{-2-\delta}$}\\
In this example, we discuss in short the comparison between a helicoid type surface and an immersion with the Hong's slow decay rate, that is $t^{-2-\delta}$. Note that the curvature for the helicoids in the previous example decay at the rate $t^{-4}$. Let the helicoid type surface $(\mathcal{H}_1, g_1)$ with the metric~\eqref{S4.3 helicoid1} discussed in the previous example and $(\mathcal{M}^*, g^*)$ with a metric
$ g^*=dt ^{2}+( b^*(x,t))^2 dx^{2}$ satisfying the assumptions of Theorem~\ref{thm2} with curvature
\be\label{SV: kexample} |K^*|=\frac{C}{(1+|t|)^{2+\delta}},\quad \text{for all }|t|>T_1\ee
for some $T_1$ large enough. We can compare the corresponding second fundamental forms
$(\ell_1,m_1,n_1)$ and $(\ell^*,m^*,n^*)$ in terms of the ``distance" between the two metrics $g_1$ and $g^*$ via
$$\int_{-\infty}^\infty |\ell_1(t)-\ell^*(t)|^2+|m_1(t)-m^*(t)|^2 dx\le C_g \int_0^T e^{\Phi(t)-\Phi(s)} \psi(s)ds  
$$
from Theorem~\ref{newthm1} for $t\in[0,T]$. In fact, here we assume that both immersions admit same initial data at $t=0$ and therefore, the difference between the two forms would arise only due to the different metrics.

From Han and Hong~\cite[Lemma 10.2.3]{HH}, we have that if $|K^*(s)|$ and $s|K^*(s)|$ are in $L^1(0,\infty)$, then
$$ 1+\int_0^t\int_0^s |K^*(\tau)|dt\tau ds\le b^*(t)\le 1+C_1 t\,.$$
Moreover, if $|K^*(t)| t^{2+\delta}$ is decreasing in $|t|$ for $|t|>T_1$, then
$$ \partial_t \ln b^*=\frac{1}{t}+O(\frac{1}{|t|^{1+\delta}})\;,$$
for sufficiently large $|t|$ and $\partial_t\ln b^*$ is bounded. Using these, we have the following asymptotic behavior of the $\psi$ given in~\eqref{S3: psi}:
\begin{align*}\label{S3: psiex2}
\psi(t):
&=\left| \frac{t}{{1+t^2}} - \del_t\ln {b^*(t)}\right|^2+\left|\del_t\ln {(1+t^2)^2}+  \del_t\ln{|K^*(t)|}\right|^2+ \left|\frac{1}{b^*(t)}-\frac{1}{\sqrt{1+t^2}}\right|\nonumber\\
&\approx\left| \frac{t}{{1+t^2}} - (\frac{1}{t}+O(\frac{1}{|t|^{1+\delta}}))\right|^2+ \left| \frac{4t}{{1+t^2}}- \frac{2+\delta}{t}\right|^2 +\left|\frac{1}{1+t^{\delta}}-\frac{1}{\sqrt{1+t^2}}\right|\,,
\end{align*}
for $|t|>T_1$.

\end{document}